\numberwithin{equation}{section}
\newtheorem{theorem}{Theorem}[section]
\newtheorem{proposition}[theorem]{Proposition}
\newtheorem{lemma}[theorem]{Lemma}
\theoremstyle{definition}
\theoremstyle{remark}
\numberwithin{equation}{section}
\newcommand{\Z}{\mathbb{Z}}
\newcommand{\Q}{\mathbb{Q}}
\newcommand{\C}{\mathbb{C}}
\newcommand{\Sq}{\operatorname{Sq}}
\title[Homotopy commutativity in quasitoric manifolds]{Homotopy commutativity in quasitoric manifolds}
\author[S. Hasui]{Sho Hasui}
\address{Department of Mathematics, Osaka Metropolitan University, Sakai, 599- 8531, Japan}
\email{s.hasui@omu.ac.jp}
\author[D. Kishimoto]{Daisuke Kishimoto}
\address{Faculty of Mathematics, Kyushu University, Fukuoka 819-0395, Japan}
\email{kishimoto@math.kyushu-u.ac.jp}
\author[Y. Tong]{Yichen Tong}
\address{Institute of Theoretical Sciences, Westlake Institute of Advanced Study, Westlake University, Zhejiang 310030, China}
\email{tongyichen@westlake.edu.cn}
\author[M. Tsutaya]{Mitsunobu Tsutaya}
\address{Faculty of Mathematics, Kyushu University, Fukuoka 819-0395, Japan}
\email{tsutaya@math.kyushu-u.ac.jp}
\date{\today}
\subjclass[2010]{57S12, 55P35, 55Q15}
\keywords{quasitoric manifolds, loop space, homotopy commutativity, Whitehead products, Samelson products}
\begin{document}

\maketitle

\begin{abstract}
  We prove that the loop space of a quasitoric manifold is homotopy commutative if and only if the underlying polytope is a product of $3$-simplices $(\Delta^3)^n$ and the characteristic matrix is equivalent to a matrix of certain type. Quasitoric manifolds over $(\Delta^3)^n$ include generalized Bott manifolds, and we also construct an infinite family of homotopy nonequivalent generalized Bott manifolds over $(\Delta^3)^n$, only half of them have homotopy commutative loop spaces. In particular, for each $n\ge 2$, there are infinitely many homotopy types in $6n$-dimensional quasitoric manifolds having homotopy (non)commutative loop spaces.
\end{abstract}


\section{Introduction}\label{Introduction}

Quasitoric manifolds were introduced by Davis and Januszkiewicz \cite{DJ} as a topological counterpart of smooth projective toric varieties. By definition, a quasitoric manifold is a closed manifold of dimension $2n$ equipped with a locally standard action of $T^n$ such that the orbit space $M/T^n$ is isomorphic to an $n$-dimensional simple polytope as a manifold with corners. Recall that every toric variety is constructed from a fan, a combinatorial object. There is a similar combinatorial construction of quasitoric manifolds, each of which is equivalent (in a precise sense defined in Section \ref{Loop space decomposition}) to that associated to a simple polytope $P$ and a certain characteristic matrix over $P$. Here, we remark that our equivalences of quasitoric manifolds are weaker than those in \cite{DJ} as they respect a fixed isomorphism $M/T^n\cong P$ while ours do not. 

It is well known that properties of a toric variety are described in terms of the corresponding fan, which exhibits a fascinating connection between algebraic geometry and combinatorics. Then it may be possible to describe topological properties of a quasitoric manifold in terms of the underlying simple polytope and the characteristic matrix, which also exhibits a fascinating connection between topology and combinatorics. There are examples of such descriptions for quasitoric manifolds, cohomology and Chern classes as in \cite{DJ}.

The understanding of a given space goes often through the study of its loop space. A first question is then whether or not it is commutative, up to homotopy. In this paper, we study the homotopy commutativity of the loop space of a quasitoric manifold. See \cite{A,AP,D1,D2,GPTW,IK2,IK3,KMY,Z} for other results on the loop spaces of quasitoric manifolds and related spaces. Complex projective spaces are special quasitoric manifolds, and the homotopy commutativity of their loop spaces were determined by Ganea \cite{G}. The first result completely determines whether or not the loop space of any quasitoric manifold is homotopy commutative in terms of the underlying simple polytope and the characteristic matrix. Let $\Delta^n$ and $E_n$ denote the $n$-simplex and the $n$-dimensional identity matrix.


\begin{theorem}
  \label{main}
  The loop space of a quasitoric manifold over a simple polytope $P$ is homotopy commutative if and only if $P=(\Delta^3)^n$ and the characteristic matrix is equivalent to
  \begin{equation}
    \label{main matrix}
    \begin{pmatrix}
      E_3&a_{11}&&a_{12}&&a_{13}&&&a_{1n}\\
      &a_{21}&E_3&a_{22}&&a_{23}&&&a_{2n}\\
      &a_{31}&&a_{32}&E_3&a_{33}&&&a_{3n}\\
      &\vdots&&\vdots&&\vdots&\ddots&&\vdots\\
      &a_{n1}&&a_{n2}&&a_{n3}&&E_3&a_{nn}
    \end{pmatrix}
  \end{equation}
  for $a_{ij}\in\Z^3$ such that
  \begin{equation}
    \label{main condition}
    a_{ii}={}^t(1,1,1)\quad\text{and}\quad(1,1,1)a_{ij}\equiv 0\mod 2\quad(i\ne j),
  \end{equation}
  where the facets of $(\Delta^3)^n$ are ordered as in Section \ref{Quasitoric manifold}.
\end{theorem}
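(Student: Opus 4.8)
\emph{The plan} is to translate homotopy commutativity of $\Omega M$ into a combinatorial statement via Samelson products. If $\Omega M$ is homotopy commutative then every Samelson product $\langle\alpha,\beta\rangle\in\pi_{p+q}(\Omega M)$ vanishes, and under the Samelson--Whitehead adjunction $\langle\alpha,\beta\rangle$ is carried to the Whitehead product $[\hat\alpha,\hat\beta]\in\pi_{p+q+1}(M)$; conversely, to \emph{prove} homotopy commutativity I must produce a homotopy between the loop multiplication $\mu$ and $\mu\circ\mathrm{sw}$, which the loop space decomposition of Section \ref{Loop space decomposition} should reduce to the vanishing of an explicit finite list of commutator pairings. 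The structural input is that a quasitoric manifold $M$ over $P$ (with $d=\dim P$ and $m$ facets) fits into the fibration $M\to DJ\to BT^d$ over the Davis--Januszkiewicz space and into the principal torus bundle $T^{m-d}\to\mathcal Z\to M$ over the moment-angle manifold; the latter gives $\pi_i(M)\cong\pi_i(\mathcal Z)$ for $i\ge 3$ and $\pi_2(M)\cong H^2(M)\cong\mathbb Z^{m-d}$, so Whitehead products in degrees $\ge 3$ are read off from $\mathcal Z$, whose homotopy type is governed by the face structure of $P$.

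\emph{Step 1: the polytope must be $(\Delta^3)^n$.} First, $\mathcal Z$ is a product of odd-dimensional spheres if and only if $P$ is a product of simplices; if $P$ is not such a product, $\mathcal Z$ carries a non-trivial Whitehead product in degree $\ge 3$ (detected by a non-trivial cup product of classes dual to spherical homology classes coming from distinct minimal non-faces), hence so does $M$, and $\Omega M$ is not homotopy commutative. So $P=\Delta^{d_1}\times\cdots\times\Delta^{d_n}$ and $\mathcal Z=S^{2d_1+1}\times\cdots\times S^{2d_n+1}$. For any $i$ with $d_i\ge 2$ and $d_i\ne 3$, the sphere $S^{2d_i+1}$ is not an $H$-space, so its Whitehead square $[\iota_{2d_i+1},\iota_{2d_i+1}]\ne 0$ by the Hopf-invariant-one theorem; since the inclusion of a factor $S^{2d_i+1}\hookrightarrow\mathcal Z$ splits, this class is non-zero in $\pi_{4d_i+1}(\mathcal Z)\cong\pi_{4d_i+1}(M)$, ruling out such $d_i$. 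The remaining case $d_i=1$ (where $S^3$ \emph{is} an $H$-space, so this argument fails) is handled by the quasitoric submanifold $\mathbb C P^1=S^2\hookrightarrow M$ over the $i$-th factor: its Whitehead square $[\iota_2,\iota_2]=2\eta$ is non-zero in $\pi_3(S^2)$, and the composite $S^3\xrightarrow{\mathrm{Hopf}}S^2=\mathbb C P^1\hookrightarrow M$ agrees up to the bundle projection with the split inclusion $S^3\hookrightarrow\mathcal Z\to M$, so $[\iota_2,\iota_2]$ survives in $\pi_3(M)$. Hence every $d_i=3$, i.e.\ $P=(\Delta^3)^n$.

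\emph{Step 2: the characteristic matrix.} For $P=(\Delta^3)^n$ one has $\mathcal Z=(S^7)^{\times n}$, and using the facet ordering of Section \ref{Quasitoric manifold} the equivalences of Section \ref{Loop space decomposition} bring $A$ to the block form \eqref{main matrix}, the normalization $a_{ii}={}^t(1,1,1)$ being forced by the characteristic non-singularity condition at a vertex together with the available sign changes. In this form $M$ is an iterated $\mathbb C P^3$-bundle, so $\Omega M$ is built by iterated homotopy-central extensions (central because $\Omega M$ is connected) with successive kernels a circle and a copy of $\Omega S^7$; homotopy commutativity of $\Omega M$ then amounts to the vanishing of the commutator pairings between these kernels. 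Now $\Omega S^7$ is homotopy commutative --- by Eckmann--Hilton applied to the loop multiplication and the pointwise Cayley multiplication inherited from $S^7$ --- and it is $5$-connected with $\pi_3(S^7)=0$, so the circle--circle and $\Omega S^7$--$\Omega S^7$ pairings vanish for dimension reasons, and the only surviving obstruction is the $\Omega S^7$--circle pairing relating the $i$-th sphere factor to the $j$-th circle factor for $i\ne j$. This pairing is a $2$-torsion class (consistently with $\Omega M$ being rationally, and $\bmod p$ for $p$ odd, homotopy commutative), and an explicit computation along the tower should identify it with $(1,1,1)a_{ij}\bmod 2$. Therefore $\Omega M$ is homotopy commutative exactly when \eqref{main condition} holds; when it does, the twisted $H$-structure on $\Omega M\simeq(\Omega S^7)^{\times n}\times T^n$ --- assembled from homotopy-commutative factors by central extensions with null commutator pairings --- is homotopy commutative.

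\emph{Main difficulty.} The crux is the equivalence in Step 2 between homotopy commutativity of $\Omega M$ and the \emph{precise} condition \eqref{main condition}: one must prove both that an even $(1,1,1)a_{ij}$ does not obstruct homotopy commutativity, by producing a coherent homotopy $\mu\simeq\mu\circ\mathrm{sw}$ and not merely checking that primary Samelson products vanish, and that an odd $(1,1,1)a_{ij}$ genuinely does obstruct it, by exhibiting a non-zero Samelson product --- a bracket of a circle class in $\pi_2(M)$ with a $7$-dimensional class in $\pi_7(M)$, landing in the $2$-torsion group $\pi_8(M)$ --- that is not annihilated by higher coherences. Controlling these $2$-torsion commutator pairings inductively along the $\mathbb C P^3$-Bott tower, and pinning down the exact coefficient $(1,1,1)a_{ij}$ rather than just which invariant is relevant, is the technical heart of the argument; by comparison Step 1, although it requires dispatching separately all polytopes that are not products of simplices and all simplex dimensions different from $3$, is comparatively routine once the detection machinery of the first paragraph is in place.
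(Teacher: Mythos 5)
Your Step 1 is essentially the paper's argument (Lemmas \ref{minimal nonface} and \ref{nondisjoint} and Proposition \ref{polytope}): minimal nonfaces of $K(P)$ give sphere retracts of $Z_{K(P)}$, and the Whitehead squares $[\iota_{2k-1},\iota_{2k-1}]$ kill every simplex factor of dimension $\ne 1,3$; your treatment of the $\Delta^1$ factors via $[\iota_2,\iota_2]=2\eta$ on a $\C P^1$ facial submanifold is a reasonable substitute for the paper's rational-homotopy argument, modulo checking that the Hopf map $S^3\to\C P^1\subset M$ really is the image of the split inclusion $S^3=Z_{K(P)_I}\hookrightarrow Z_{K(P)}$ on $\pi_3$. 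The problems are all in Step 2. First, a concrete error: a quasitoric manifold over $(\Delta^3)^n$ whose characteristic matrix is in the normalized form \eqref{main matrix} is \emph{not} in general an iterated $\C P^3$-bundle --- the blocks $a_{ij}$ occur both above and below the diagonal, and by \cite{CMS} the generalized-Bott-tower structure exists only when the matrix can be brought to triangular form. Only the specific examples $M(k,n)$ of Theorem \ref{family} are Bott towers. So the inductive ``central extension along the tower'' you propose has nothing to induct along, and any argument must (as the paper's does) work with $M$ all at once.

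Second, and more importantly, the two things you defer are precisely the content of the theorem. (i) That homotopy commutativity of $\Omega M$ is \emph{equivalent} to the vanishing of the finitely many primary Samelson products $\langle a_i,b_j\rangle$ of the generating circles and $6$-spheres --- your ``higher coherences'' worry --- is not automatic; the paper disposes of it in Lemma \ref{equivalent condition} by quoting \cite[Proposition 1]{KK} and by lifting the sphere--sphere brackets to the homotopy commutative $(\Omega S^7)^n$. (ii) The identification of the surviving $2$-torsion obstruction in $\pi_8(M)$ with the residue $(1,1,1)a_{ij}\bmod 2$, which you leave as ``an explicit computation along the tower should identify it,'' is the technical heart of the paper: all of Section \ref{Computation of Whitehead products} is devoted to it. There one forms the cofibre $Y$ of $(S^7)^n\to M$, computes $\pi_{2d+1}(Y,X)$ in terms of relative Whitehead products and $\bar\beta_i\circ\bar\eta$ following Barratt--James--Stein, James and Toda, and proves (Proposition \ref{criterion}) that all $[\alpha_i,\beta_j]$ vanish if and only if $\Sq^2 q_k=0$ in $\Z_2[t_1,\dots,t_n]$; substituting $q_i=t_i\prod_{j=1}^3\bigl(\sum_k a_{ik}^jt_k\bigr)$ then yields exactly the parity condition \eqref{main condition}. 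Without an argument of this kind your proposal establishes neither direction of the ``if and only if'' in the characteristic-matrix condition, so as it stands it is a plan rather than a proof.
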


Remarks on Theorem \ref{main} are in order. First, equivalences of characteristic matrices will be defined in Section \ref{Loop space decomposition}. Second, the loop spaces of quasitoric manifolds over a common simple polytope have the same homotopy type. Then Theorem \ref{main} may indicate that there are quasitoric manifolds whose loop spaces are homotopy equivalent but not H-equivalent, which is verified by Theorem \ref{family} below. Third, we can further consider the higher homotopy commutativity of the loop space of a quasitoric manifold if it is homotopy commutative. Actually, by looking at the cohomology of a quasitoric manifold, we can find a nontrivial quadruple higher Whitehead product if its loop space is homotopy commutative. Then if the loop space of a quasitoric manifold is homotopy commutative, it is not a $C_4$-space in the sense of Williams \cite{W}, so it is not very highly homotopy commutative. Fourth, every characteristic matrix over $(\Delta^3)^n$ is equivalent to the matrix \eqref{main matrix} satisfying the first condition of \eqref{main condition} (Proposition \ref{standard form}). Then the second condition of \eqref{main condition} guarantees that the loop space of a quasitoric manifold over $(\Delta^3)^n$ is homotopy commutative. On the other hand, $\C P^n$ is a quasitoric manifold over $\Delta^n$, and in particular, a characteristic matrix of $\C P^3$ is
\begin{equation}
  \label{B}
  B=
  \begin{pmatrix}
    1&0&0&1\\
    0&1&0&1\\
    0&0&1&1
  \end{pmatrix}.
\end{equation}
Then Theorem \ref{main} recovers Ganea's result \cite{G} that the loop space of $\C P^n$ is homotopy commutative if and only if $n=3$, where $\C P^n$ is a quasitoric manifold over $\Delta^n$. Thus Theorem \ref{main} can be thought of as an extension of Ganea's result. See \cite{KMMT,KTT} for other extensions of Ganea's result.

As mentioned above, every characteristic matrix over $\Delta^3$ is equivalent to \eqref{B}, so every quasitoric manifold over $\Delta^3$ is equivalent to $\C P^3$ (cf. \cite[Example 1.18]{DJ}). However, in general, it is quite hard to describe all characteristic matrices over a given simple polytope, and this is the case for $(\Delta^3)^n$ with $n\ge 2$ as in \cite{CMS}. Then one cannot immediately see how many nonequivalent quasitoric manifolds over $(\Delta^3)^n$ for $n\ge 2$ there are, whose loop spaces are (not) homotopy commutative. For each $n\ge 2$, we construct an infinite family of homotopy nonequivalent quasitoric manifolds over $(\Delta^3)^n$, only half of them have homotopy commutative loop spaces. Let $B$ be as in \eqref{B}, and let
\[
  N=\begin{pmatrix}0&0&0&1\\0&0&0&0\\0&0&0&0\end{pmatrix}.
\]
For $k>0$ and $n\ge 2$, we define a $3n\times 4n$ matrix
\[
  B(k,n)=
  \begin{pmatrix}
    B&kN\\
    &B&kN\\
    &&\ddots&\ddots\\
    &&&B&kN\\
    &&&&B
  \end{pmatrix}.
\]
The matrix $B(k,n)$ is a characteristic matrix over $(\Delta^3)^n$, so we get the corresponding quasitoric manifold $M(k,n)$ over $(\Delta^3)^n$. Note that $M(k,n)$ is defined only for $n\ge 2$ and a positive integer $k$. Observe that Theorem \ref{main} shows that the loop space of $M(k,n)$ is homotopy commutative if and only if $k$ is even. The second result concerns the homotopy types in $M(k,n)$, which implies that for each $n\ge 2$, there are infinitely many homotopy types in $6n$-dimensional quasitoric manifolds having homotopy (non)commutative loop spaces.

\begin{theorem}
  \label{family}
  The quasitoric manifolds $M(k,n)$ and $M(l,n)$ are not homotopy equivalent for $k\ne l$.
\end{theorem}

Remarks on Theorem \ref{family} are in order. First, the quasitoric manifold $M(k,n)$ is a so-called generalized Bott manifold, that is, $M(k,n)$ is obtained by iterated ``nice'' $\C P^3$-bundles starting from a point. Second, we will actually prove that $M(k,n)$ satisfies cohomological rigidity; $M(k,n)$ and $M(l,n)$ are equivalent if and only if their integral cohomology are isomorphic. Third, if $M_i$ is a quasitoric manifold over $P_i$ for $i=1,\ldots,n$, then $M_1\times\cdots\times M_n$ is a quasitoric manifold over $P_1\times\cdots\times P_n$. Hence one can construct a quasitoric manifold over $(\Delta^3)^n$ from quasitoric manifolds over $(\Delta^3)^m$ for $m<n$. However, $M(k,n)$ is not equivalent to a product of nontrivial quasitoric manifolds (Theorem \ref{atomicity}), so it is an ``atomic'' quasitoric manifold over $(\Delta^3)^n$.

The paper is organized as follows. In Section \ref{Loop space decomposition}, we recall the basics of quasitoric manifolds, and show a loop space decomposition of quasitoric manifolds. Then by using this decomposition, we deduce that the loop space of a quasitoric manifold is homotopy commutative only if the underlying simple polytope is a product of $\Delta^3$. In Section \ref{Computation of Whitehead products}, we extend the method of Barrat, James and Stein \cite{BJS} for computing Whitehead products. In Section \ref{Quasitoric manifold}, we prove Theorem \ref{main} by applying the results in Sections \ref{Loop space decomposition} and \ref{Computation of Whitehead products}. We also prove Theorem \ref{family} by a direct cohomology computation.

\subsection*{Acknowledgement}

DK and MT were partially supported by JSPS KAKENHI Grant Numbers JP22K03284 and JP22K03317, respectively. YT was partially supported by JST SPRING Grant Number JPMJSP2110. The authors are grateful to J\'{e}r\^{o}me Scherer and anonymous referees for useful comments.


\section{Loop space decomposition}\label{Loop space decomposition}

In this section, we recall the basic properties of quasitoric manifolds, and show a loop space decomposition of a quasitoric manifold. Then by applying this decomposition, we prove that the loop space of a quasitoric manifold is not homotopy commutative unless its underlying simple polytope is $(\Delta^3)^n$.

First, we define characteristic matrices over a simple polytope and equivalences among them. Let $P$ be an $n$-dimensional convex polytope. A codimension one face of $P$ will be called a facet. We say that $P$ is simple if exactly $n$ facets of $P$ meet at each vertex. For example, simplices are simple polytopes, and a product of simple polytopes is a simple polytope. Suppose that $P$ is simple and has $m$ facets $F_1,\ldots,F_m$. A characteristic matrix over $P$ is an integer matrix $(a_1\;\ldots\;a_m)$ for $a_1,\ldots,a_m\in\Z^n$ such that $\det(a_{i_1}\;\ldots\;a_{i_n})=\pm 1$ whenever $F_{i_1}\cap\cdots\cap F_{i_n}\ne\emptyset$ for $i_1<\cdots<i_n$. Since an automorphism of $P$ as a combinatorial polytope permutes facets, it acts on characteristic matrices over $P$ by column permutation. We define that characteristic matrices $A$ and $B$ over $P$ are equivalent if
\begin{equation}
  \label{equivalence matrix}
  A=\alpha\cdot(QBD)
\end{equation}
for $Q\in\mathrm{GL}_n(\Z)$, a diagonal matrix $D$ with diagonal entries $\pm 1$ and an automorphism $\alpha$ of $P$.

Next, we recall the construction of a quasitoric manifold using a moment-angle complex. Let $K$ be a simplicial complex with vertex set $[m]=\{1,2,\ldots,m\}$, where an ordering of vertices is given. The moment-angle complex for $K$ is defined by
\[
  Z_K=\bigcup_{\sigma\in K}Z(\sigma),
\]
where $Z(\sigma)=X_1\times\cdots\times X_m$ such that $X_i=D^2$ for $i\in\sigma$ and $X_i=S^1$ for $i\not\in\sigma$. Note that the $m$-dimensional torus $T^m$ acts naturally on $Z_K$. We will use the following obvious property of a moment-angle complex. For $\emptyset\ne I\subset[m]$, let
\[
  K_I=\{\sigma\in K\mid\sigma\subset I\}.
\]

\begin{lemma}
  \label{retract}
  For $\emptyset\ne I\subset[m]$, $Z_{K_I}$ is a retract of $Z_K$.
\end{lemma}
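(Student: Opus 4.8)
The plan is to realize $Z_{K_I}$ as a retract of $Z_K$ by exhibiting explicit continuous maps $s\colon Z_{K_I}\to Z_K$ and $r\colon Z_K\to Z_{K_I}$ with $r\circ s=\mathrm{id}$, both obtained from the product structure of the moment-angle complex. View $Z_{K_I}$ as the moment-angle complex on the vertex set $I$, so that $Z_{K_I}\subseteq\prod_{i\in I}X_i$ with each $X_i$ equal to $D^2$ or $S^1$, and $Z(\tau)\subseteq Z_{K_I}$ for every $\tau\in K_I$. I would take $r$ to be the restriction to $Z_K$ of the coordinate projection $\prod_{i=1}^m X_i\to\prod_{i\in I}X_i$ that forgets the coordinates indexed by $[m]\setminus I$. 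To check that $r$ lands in $Z_{K_I}$: a point of $Z_K$ lies in $Z(\sigma)$ for some $\sigma\in K$, meaning its $i$-th coordinate is in $D^2$ for $i\in\sigma$ and in $S^1$ for $i\notin\sigma$; since $K$ is a simplicial complex, $\sigma\cap I\in K$, hence $\sigma\cap I\in K_I$, and the projected point lies in $Z(\sigma\cap I)\subseteq Z_{K_I}$.

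For the section, fix a basepoint $b\in S^1$ and define $s\colon Z_{K_I}\to Z_K$ by inserting $b$ in the coordinates outside $I$, i.e. $s(y)_i=y_i$ for $i\in I$ and $s(y)_i=b$ for $i\in[m]\setminus I$. If $y\in Z(\tau)$ with $\tau\in K_I\subseteq K$, then all coordinates of $s(y)$ outside $\tau$ lie in $S^1$, so $s(y)\in Z(\tau)\subseteq Z_K$; thus $s$ is well defined. Both maps are continuous, being restrictions of coordinatewise maps, and $r\circ s=\mathrm{id}_{Z_{K_I}}$ directly from the formulas, which proves the lemma.

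I do not expect a genuine obstacle here; the statement is essentially formal once the coordinate projection is written down. The only thing to be careful about is the compatibility with the simplicial structure in both directions — namely that $K$ is closed under taking subsets (so that $\sigma\cap I\in K$ for the retraction) and that $K_I$ is a full subcomplex (so that $Z(\tau)\subseteq Z_{K_I}$ for the section). Alternatively one could phrase the same argument in terms of polyhedral products, but the direct coordinatewise description suffices.
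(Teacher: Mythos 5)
Your proof is correct: the coordinate projection together with the basepoint-insertion section is exactly the standard argument, and the paper itself states this lemma without proof as an ``obvious property,'' so your write-up supplies precisely the intended verification. (One tiny quibble in your closing remark: the fullness of $K_I$ is what guarantees $\sigma\cap I\in K_I$ in the retraction step, while the section only needs $K_I\subseteq K$; but the proof itself uses these facts correctly.)
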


\begin{proof}
  We can identify $Z_{K_I}$ with the subspace
  \[
    \{(x_1,\ldots,x_m)\in Z_K\mid x_i\text{ is the basepoint for }i\in I\}
  \]
  of $Z_K$. Then the statement follows.
\end{proof}

Let $P$ be an $n$-dimensional simple polytope with $m$ facets. Let $K(P)$ denote the boundary of the dual simplicial polytope of $P$. Then $K(P)$ is an $(n-1)$-dimensional simplicial sphere with $m$ vertices. Let $A$ be a characteristic matrix over $P$. Then the kernel of the linear map $A\colon\Z^m\to\Z^n$ defines a split subtorus $T(A)$ of dimension $m-n$ which acts freely on $Z_{K(P)}$. Let
\[
  M(A)=Z_{K(P)}/T(A).
\]
By \cite{DJ}, we have:

\begin{proposition}
  \label{M(A)}
  The orbit space $M(A)$ is a quasitoric manifold over $P$ such that every quasitoric manifold over $P$ is equivalent to $M(A)$ for some characteristic matrix $A$ over $P$.
\end{proposition}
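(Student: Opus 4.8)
This is the Davis--Januszkiewicz correspondence between quasitoric manifolds over $P$ and characteristic data, phrased via moment-angle complexes, so the plan is to recall that argument and then check that our (weaker) equivalence of characteristic matrices matches the resulting ambiguity. For the first assertion, recall that since $K(P)$ is a simplicial $(n-1)$-sphere with $m$ vertices, $Z_{K(P)}$ is a closed manifold of dimension $m+n$ carrying a smooth $T^m$-action whose isotropy subgroups are exactly the coordinate subtori $T^\tau$ for $\tau\in K(P)$ and whose orbit space is $P$ as a manifold with corners. The defining condition on a characteristic matrix $A$ says precisely that for every maximal simplex $\tau$ of $K(P)$ --- equivalently, for every vertex of $P$, where $n$ facets meet --- the columns of $A$ indexed by $\tau$ form a $\Z$-basis of $\Z^n$; this is equivalent to $T(A)\cap T^\tau=\{1\}$, and since every isotropy subtorus is contained in some maximal $T^\tau$, the subtorus $T(A)$ acts freely on $Z_{K(P)}$. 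Hence $M(A)=Z_{K(P)}/T(A)$ is a closed manifold of dimension $m+n-(m-n)=2n$.

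Next I would equip $M(A)$ with a locally standard torus action. Because $P$ has a vertex, $A\colon\Z^m\to\Z^n$ has an invertible $n\times n$ submatrix, so $A$ is surjective and $T^m/T(A)\cong T^n$ acts on $M(A)$ with orbit space $Z_{K(P)}/T^m=P$. Over a neighborhood of a fixed point lying above a vertex $v=F_{i_1}\cap\cdots\cap F_{i_n}$, one reads off the action from the coordinates of $Z_{K(P)}$ and finds that, after the change of basis of $T^n$ given by the columns of $A$ over $v$ (which is a $\Z$-basis), it is the standard action on $\C^n$; thus the action is locally standard and $M(A)$ is a quasitoric manifold over $P$. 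For the converse, given any quasitoric manifold $M$ over $P$ with projection $\pi\colon M\to P$, assign to each facet $F_i$ a primitive generator $\lambda_i\in\Z^n$ of the circle isotropy group along the part of $M$ over the relative interior of $F_i$; local standardness forces the $\lambda$'s over each vertex to be a $\Z$-basis, so $A=(\lambda_1\ \cdots\ \lambda_m)$ is a characteristic matrix over $P$, and the Davis--Januszkiewicz reconstruction identifies $M$ with $M(A)$ equivariantly over $P$. Passing to our weaker equivalence only adds freedom: the sign ambiguity in each $\lambda_i$ gives the diagonal matrix $D$, a change of basis of $T^n$ gives $Q$, and an automorphism $\alpha$ of $P$ is harmless extra flexibility; so $M$ is equivalent to $M(A)$ in the sense defined above.

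I do not expect a genuine obstacle here, as the content is classical; the only steps needing care are identifying the isotropy data of $Z_{K(P)}$ with the combinatorics of $P$ and checking the compatibility of the two equivalence relations, both of which are routine once set up. In a final version one could simply cite \cite{DJ} (together with the moment-angle reformulation due to Buchstaber and Panov) and keep this verification brief.
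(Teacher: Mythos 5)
The paper gives no argument here at all --- it simply cites Davis and Januszkiewicz \cite{DJ} --- and your proposal is a correct reconstruction of exactly that classical argument (in its moment-angle-complex formulation), including the right reason why the determinant condition makes $T(A)$ act freely and the right accounting of how the weaker equivalence relation absorbs the sign, basis, and polytope-automorphism ambiguities. So this is essentially the same approach as the paper, just written out instead of cited.
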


Let $M$ and $N$ be quasitoric manifolds of dimension $2n$. A map $f\colon M\to N$ is weakly equivariant if there is an automorphism $\theta\colon T^n\to T^n$ such that
\[
  f(tx)=\theta(t)f(x)
\]
for $t\in T^n$ and $x\in M$. We say that $M$ and $N$ are equivalent if there is a weakly equivariant homeomorphism between them. Note that if $M$ and $N$ are equivalent, their underlying simple polytopes are isomorphic. Then equivalent quasitoric manifolds are essentially the same. As remarked in Section \ref{Introduction}, our equivalences of quasitoric manifolds are weaker than those in \cite{DJ} as Davis and Januszkiewicz demand equivalences to preserve an extra structure, a fixed isomorphism between $M/T^n$ and a simple polytope. By \cite{DJ}, we also have:

\begin{proposition}
  \label{equivalence}
  The quasitoric manifolds $M(A_1)$ and $M(A_2)$ over $P$ are equivalent if and only if the characteristic matrices $A_1$ and $A_2$ are equivalent as in \eqref{equivalence matrix}.
\end{proposition}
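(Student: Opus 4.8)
The final statement to establish is Proposition~\ref{equivalence}, asserting that $M(A_1)$ and $M(A_2)$ over $P$ are equivalent (in the weak sense, not respecting a chosen identification $M/T^n\cong P$) exactly when the characteristic matrices $A_1$ and $A_2$ are equivalent in the sense of the displayed relation $A = \alpha\cdot(QBD)$. The plan is to unwind both sides of the biconditional against the construction $M(A)=Z_{K(P)}/T(A)$ and the Davis--Januszkiewicz reconstruction theorem for the \emph{rigid} notion of equivalence, then account for the automorphism $\alpha$ of $P$ separately.

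First I would treat the ``if'' direction. Suppose $A_1=\alpha\cdot(QA_2D)$. The effect of $Q\in\mathrm{GL}_n(\Z)$ is to post-compose the linear map $A_2\colon\Z^m\to\Z^n$ with an automorphism of $\Z^n$; this does not change $\ker A_2$, hence not $T(A_2)$, and so $M(QA_2)=M(A_2)$ on the nose, while the induced map on the quotient torus $T^n$ is exactly the automorphism $\theta$ witnessing weak equivariance. The diagonal sign matrix $D$ acts on $Z_{K(P)}$ through the coordinatewise complex-conjugation homeomorphisms on the relevant $D^2$/$S^1$ factors; it carries $T(A_2)$ to $T(A_2D)$ equivariantly, again inducing a weakly equivariant homeomorphism on quotients. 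Finally an automorphism $\alpha$ of $P$ permutes the facets, hence the vertices of $K(P)$, inducing a simplicial automorphism of $K(P)$ and thus a $T^m$-equivariant self-homeomorphism of $Z_{K(P)}$ that permutes coordinates; this conjugates $T(\alpha\cdot C)$ to $T(C)$ and descends to a weakly equivariant homeomorphism. Composing the three gives a weakly equivariant homeomorphism $M(A_2)\to M(A_1)$.

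For the ``only if'' direction I would invoke the Davis--Januszkiewicz classification in its original form: quasitoric manifolds over $P$ up to equivalence \emph{respecting} the identification $M/T^n\cong P$ correspond bijectively to characteristic matrices over $P$ up to the relation generated by $Q\in\mathrm{GL}_n(\Z)$ and the sign matrix $D$ (this is essentially \cite[Prop.~1.8 and §1.5]{DJ}). Given a weakly equivariant homeomorphism $f\colon M(A_1)\to M(A_2)$ in our weaker sense, passing to orbit spaces yields a homeomorphism $\bar f\colon P\to P$ of manifolds-with-corners, i.e.\ an automorphism $\alpha$ of $P$; precomposing $f$ (equivalently, acting on $A_1$) by $\alpha^{-1}$ reduces us to a weakly equivariant homeomorphism that now \emph{does} respect the identification with $P$, whereupon the rigid classification theorem gives $\alpha^{-1}\cdot A_1 = QA_2D$ for suitable $Q,D$, which is the asserted equivalence.

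The main obstacle I anticipate is the bookkeeping in the ``only if'' step: one must be careful that a weakly equivariant homeomorphism, even if it moves the identification with $P$, still induces an \emph{automorphism of the polytope} (not merely a self-homeomorphism of the underlying topological space), which uses that the characteristic submanifolds and the face structure are intrinsic to the $T^n$-action; and one must check that the automorphism $\theta$ of $T^n$ appearing in the definition of weak equivariance is exactly realized by the $\mathrm{GL}_n(\Z)$ and sign-change data, with no extra slack. Both points are implicit in \cite{DJ}, so rather than reprove them I would cite \cite[§1.5, Prop.~1.8]{DJ} for the rigid correspondence and note that the only additional freedom in our weaker equivalence is precisely the $\mathrm{Aut}(P)$-action by column permutation, matching the factor $\alpha$ in the definition of equivalence of characteristic matrices.
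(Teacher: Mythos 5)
The paper gives no argument for this proposition beyond the citation ``By \cite{DJ}, we also have:'', so there is nothing to compare step by step; your proposal correctly supplies the intended details, reducing to the Davis--Januszkiewicz rigid classification (equivalences covering the identity of $P$, which accounts for the $Q\in\mathrm{GL}_n(\Z)$ and sign-matrix $D$ freedom) and handling the $\mathrm{Aut}(P)$-action by column permutation separately. Both directions check out: the three generating moves $Q$, $D$, $\alpha$ are realized on $Z_{K(P)}$ by the identity, coordinatewise conjugation, and coordinate permutation respectively, and conversely a weakly equivariant homeomorphism descends to a self-map of $P$ preserving the orbit-type stratification, hence to a combinatorial automorphism, which is exactly the extra slack beyond the rigid case.
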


Now we prove a loop decomposition of a quasitoric manifold.

\begin{proposition}
  \label{decomposition}
  Let $M$ be a quasitoric manifold over an $n$-dimensional simple polytope $P$ with $m$ facets. Then there is a homotopy equivalence
  \[
    \Omega M\simeq T^{m-n}\times\Omega Z_{K(P)}.
  \]
\end{proposition}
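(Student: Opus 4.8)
The plan is to use the fact that $M(A) = Z_{K(P)}/T(A)$ is the quotient of the moment-angle complex by a free action of the split torus $T(A) \cong T^{m-n}$, so that we have a principal $T^{m-n}$-bundle
\[
  T^{m-n} \longrightarrow Z_{K(P)} \longrightarrow M(A).
\]
First I would pass to the associated fibration of loop spaces, or rather to the long exact sequence of homotopy groups, but the cleaner route is to observe that the classifying map $M(A) \to BT^{m-n} \simeq (\C P^\infty)^{m-n}$ for this bundle becomes null-homotopic after looping, because $BT^{m-n}$ is a loop space (an infinite loop space, in fact) and $\Omega BT^{m-n} \simeq T^{m-n}$; more to the point, $\Omega$ of the fibration sequence $Z_{K(P)} \to M(A) \to BT^{m-n}$ gives a fibration
\[
  \Omega Z_{K(P)} \longrightarrow \Omega M(A) \longrightarrow \Omega BT^{m-n} = T^{m-n},
\]
and since $T^{m-n}$ is a topological group this fibration admits a section coming from the splitting of the torus, or — more robustly — any fibration over a torus with a homotopy section is fiber-homotopy trivial after one more loop; here we already have the loop, so it remains to produce the section.

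The key step, then, is to exhibit a section of $\Omega M(A) \to T^{m-n}$, equivalently a lift of the identity of $T^{m-n} = \Omega BT^{m-n}$ through $\Omega$ of the bundle projection. Since $T(A)$ is a \emph{split} subtorus of $T^m$, the inclusion $T(A) \hookrightarrow Z_{K(P)}$ (as the orbit of a basepoint, which is an embedding because the action is free) composes with the projection $Z_{K(P)} \to M(A)$ to a null map, but more usefully the splitting $T^m \cong T(A) \times T^n$ furnishes a retraction $T^m \to T(A)$, and the standard inclusion $T^m = Z(\emptyset) \hookrightarrow Z_{K(P)}$ together with this retraction data lets one build, after looping, a map $T^{m-n} \to \Omega M(A)$ splitting the projection. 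Concretely, I would use that the bundle $Z_{K(P)} \to M(A)$ is classified by a map $M(A) \to BT(A)$ which, composed with the canonical map from $M(A)$ into the Davis–Januszkiewicz space $BT^m_{K(P)}$ (whose loop space is $\Omega Z_{K(P)} \times T^m$ by the standard moment-angle fibration), is induced by the \emph{linear} projection $T^m \to T^m/T(A) \cong T^n$; splitness makes this projection split, and looping a split surjection of tori gives a split surjection, which is what powers the section.

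Once the section exists, the fibration $\Omega Z_{K(P)} \to \Omega M(A) \to T^{m-n}$ is principal-ish over a group and hence trivial: a fibration $F \to E \to B$ with $B$ a topological group (or even just a loop space) and with a section is fiber-homotopy equivalent to $F \times B$, via the map $F \times B \to E$, $(f,b) \mapsto \mu(\iota(f), s(b))$, using the fiberwise group structure on $\Omega M(A)$ and the section $s$. This yields $\Omega M(A) \simeq \Omega Z_{K(P)} \times T^{m-n}$, which is the claim since $m-n$ is the rank of $T(A)$.

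The main obstacle I expect is not the triviality-from-a-section step, which is formal, but the clean construction of the section — i.e. checking that the splitting of the torus $T(A) \subset T^m$ actually descends to a homotopy-theoretic splitting after passing to $M(A)$ and looping. The subtlety is that the homotopy fiber sequence $Z_{K(P)} \to M(A) \to BT(A)$ comes from a \emph{free} action, so the Borel construction agrees with the honest quotient; once that identification is in hand, the map $M(A) \to BT(A)$ factors through the Davis–Januszkiewicz space $(\C P^\infty)^m \simeq BT^m$ by the linear inclusion of $\Z^{m-n} = \ker(A\colon \Z^m \to \Z^n)$ as a direct summand, and the direct-summand condition is exactly what gives a splitting $BT^m \to BT(A)$ and hence, after looping, the section $T^{m-n} \to \Omega M(A)$. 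So the real content is the bookkeeping that turns "$T(A)$ is split in $T^m$" into "$\Omega M(A) \to T^{m-n}$ has a section", and everything after that is standard fibration theory.
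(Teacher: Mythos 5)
Your overall strategy coincides with the paper's: loop the fibration $Z_{K(P)}\to M\to BT^{m-n}$, produce a section of $\Omega M\to T^{m-n}$, and then use the H-structure of $\Omega M$ to conclude that the H-fibration splits, the last step being the formal one you describe via $(f,b)\mapsto\mu(\iota(f),s(b))$ and the five lemma. Where you diverge is in the construction of the section, and this is exactly the step you leave as a plan. The paper gets it almost for free: by \cite[Theorem 3.4.7]{BP} the moment-angle complex $Z_{K(P)}$ is $2$-connected, so $\Omega Z_{K(P)}$ is simply connected, hence $\pi_1(\Omega M)\to\pi_1(T^{m-n})=\Z^{m-n}$ is an isomorphism; choosing loops $S^1\to\Omega M$ representing the generators and multiplying them with the H-structure gives a map $T^{m-n}\to\Omega M$ which is a section because $T^{m-n}$ is aspherical. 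No splitness of $T(A)$ in $T^m$ and no Davis--Januszkiewicz space are needed.

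Two concrete issues with your route as written. First, the opening claim that the classifying map $M(A)\to BT^{m-n}$ ``becomes null-homotopic after looping'' is false: the looped map $\Omega M\to T^{m-n}$ is surjective on $\pi_1$ (this is precisely why a section can exist and why the splitting is nontrivial); you abandon this remark, but it should not stand. Second, and more seriously, what you actually establish from splitness is a \emph{factorization} of the projection, namely that $\Omega M\to T^{m-n}$ factors as $\Omega M\to\Omega DJ(K(P))\to T^m\xrightarrow{r}T(A)$; a factorization of a map is not a section of it, and you never supply the step that converts one into the other. The missing argument is to view the section $T^m\to\Omega DJ(K(P))$ (which itself must be constructed, by the same multiply-the-circles device, or quoted from the standard splitting $\Omega DJ(K)\simeq T^m\times\Omega Z_K$) as a map over $T^n$ and pass to induced maps on homotopy fibers of the projections to $T^n$, identifying $\Omega M$ with $\mathrm{hofib}(\Omega DJ(K(P))\to T^n)$ and $T^{m-n}$ with $\mathrm{hofib}(T^m\to T^n)$. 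This can be made to work, but it reduces the problem to producing a section of $\Omega DJ(K(P))\to T^m$, which is the same kind of problem you started with; the paper's appeal to $2$-connectivity of $Z_{K(P)}$ short-circuits all of this.
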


\begin{proof}
  By Proposition \ref{M(A)}, there is a homotopy fibration $Z_{K(P)}\to M\to BT^{m-n}$, so we get an H-fibration
  \[
    \Omega Z_{K(P)}\to\Omega M\to T^{m-n}.
  \]
  By \cite[Theorem 3.4.7]{BP}, $Z_{K(P)}$ is $2$-connected, so $\Omega Z_{K(P)}$ is simply-connected. Then the map $\Omega M\to T^{m-n}$ has a section, implying the above H-fibration splits. Thus the statement is proved.
\end{proof}

We record an obvious fact about homotopy commutativity.

\begin{lemma}
  \label{retract commutative}
  Let $X,Y$ be H-groups, and let $f\colon X\to Y$ be an H-map. If $X$ is not homotopy commutative and $f$ has a left homotopy inverse, then $Y$ is not homotopy commutative.
\end{lemma}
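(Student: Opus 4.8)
The plan is to argue by contraposition: assuming $Y$ is homotopy commutative, I would deduce that $X$ is homotopy commutative as well, contradicting the hypothesis on $X$. Write $\mu_X\colon X\times X\to X$ and $\mu_Y\colon Y\times Y\to Y$ for the multiplications, and let $\tau$ denote the coordinate-swap map of a square (on either $X\times X$ or $Y\times Y$). Recall that an H-space $Z$ with multiplication $\mu_Z$ is homotopy commutative precisely when $\mu_Z\simeq\mu_Z\circ\tau$.

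First I would exploit that $f$ is an H-map, i.e. $f\circ\mu_X\simeq\mu_Y\circ(f\times f)$. Precomposing with $\tau$ and using the obvious identity $\tau\circ(f\times f)=(f\times f)\circ\tau$, one gets $f\circ\mu_X\circ\tau\simeq\mu_Y\circ\tau\circ(f\times f)$. If now $Y$ is homotopy commutative, then $\mu_Y\simeq\mu_Y\circ\tau$, so the right-hand sides of these two homotopy relations agree, whence
\[
  f\circ\mu_X\simeq f\circ\mu_X\circ\tau .
\]

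Finally I would invoke the left homotopy inverse $g\colon Y\to X$, with $g\circ f\simeq\mathrm{id}_X$. Postcomposing the last homotopy with $g$ gives $(g\circ f)\circ\mu_X\simeq(g\circ f)\circ\mu_X\circ\tau$, and replacing $g\circ f$ by $\mathrm{id}_X$ on both sides yields $\mu_X\simeq\mu_X\circ\tau$; that is, $X$ is homotopy commutative, a contradiction. There is essentially no obstacle in this argument: the only thing worth noting is that $g$ need not be an H-map, which is harmless since we only ever postcompose with it, the H-map hypothesis being used solely for $f$. (One could equivalently run the argument with the commutator maps $c_X, c_Y$ in place of $\mu_X, \mu_Y$, which is the context in which the H-group hypothesis — providing homotopy inverses — is the natural setting.)
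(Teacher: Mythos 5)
Your proof is correct. It takes the same contrapositive route as the paper but with a slightly different technical vehicle: the paper phrases the argument through the Samelson product, noting that $\langle 1_X,1_X\rangle$ is nontrivial when $X$ is not homotopy commutative, that naturality for the H-map $f$ gives $f\circ\langle 1_X,1_X\rangle=\langle f,f\rangle$, that $\langle f,f\rangle$ vanishes when $Y$ is homotopy commutative, and that postcomposing with the left inverse $g$ then kills $\langle 1_X,1_X\rangle$ --- a contradiction. You instead chase the multiplications directly, deriving $\mu_X\simeq\mu_X\circ\tau$ from $\mu_Y\simeq\mu_Y\circ\tau$ via $f\circ\mu_X\simeq\mu_Y\circ(f\times f)$ and $g\circ f\simeq\mathrm{id}_X$. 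The two arguments are step-for-step parallel (your parenthetical remark about commutator maps is exactly the paper's version), but yours is marginally more elementary: it never needs the homotopy inverses, so it proves the statement for H-spaces and not just H-groups, and it avoids quoting naturality of Samelson products. Your observation that $g$ need not be an H-map is correct and applies equally to the paper's proof, where $g$ is only ever used to postcompose.
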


\begin{proof}
  Let $g\colon Y\to X$ be a left homotopy inverse of $f$. If $Y$ is homotopy commutative, then by definition, the Samelson product $\langle f,f\rangle$ is trivial, implying a contradiction
  \[
    0\ne\langle 1_X,1_X\rangle=g\circ f\circ\langle 1_X,1_X\rangle=g\circ\langle f,f\rangle=0.
  \]
  Thus the statement is proved.
\end{proof}

Now we consider conditions on the underlying polytope of a quasitoric manifold $M$ that guarantee $\Omega M$ is not homotopy commutative. Let $K$ be a simplicial complex. We say that a nonempty subset $I$ of the vertex set of $K$ is a minimal nonface of $K$ if $I$ is not a simplex of $K$ and all proper subsets of $I$ are simplices of $K$. Equivalently, $K_I=\partial\Delta^{|I|-1}$.

\begin{lemma}
  \label{minimal nonface}
  Let $P$ be a simple polytope. If $K(P)$ has a minimal nonface of cardinality $2,3$ or $\ge 5$, then the loop space of a quasitoric manifold over $P$ is not homotopy commutative.
\end{lemma}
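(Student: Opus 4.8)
The plan is to reduce homotopy commutativity of $\Omega M$ to homotopy commutativity of $\Omega Z_{K(P)}$ and then to exhibit, for each of the three types of minimal nonface, a suitable subcomplex whose moment-angle complex carries a nontrivial Samelson (equivalently, Whitehead) product which survives into $\Omega Z_{K(P)}$. By Proposition \ref{decomposition}, $\Omega M \simeq T^{m-n}\times\Omega Z_{K(P)}$ as H-spaces (the torus factor is abelian), so $\Omega M$ is homotopy commutative if and only if $\Omega Z_{K(P)}$ is. Next, if $I$ is a minimal nonface of $K(P)$, then $K(P)_I = \partial\Delta^{|I|-1}$, and by Lemma \ref{retract} the moment-angle complex $Z_{K(P)_I}$ is a retract of $Z_{K(P)}$; applying $\Omega$ and using Lemma \ref{retract commutative}, it suffices to show that $\Omega Z_{\partial\Delta^{s}}$ is not homotopy commutative whenever $s+1 = |I| \in \{2,3\}\cup\{5,6,7,\dots\}$.

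The key computation is therefore the identification of $Z_{\partial\Delta^s}$ and the detection of a nontrivial Samelson product in its loop space. Here I would recall the standard fact (from \cite{BP}) that $Z_{\partial\Delta^s}$ is homeomorphic to the sphere $S^{2s+1}$: indeed $\partial\Delta^s$ is the boundary of an $s$-simplex on $s+1$ vertices, its only nonface is the whole vertex set, and the associated moment-angle complex is the join-type construction giving $S^{2(s+1)-1}$. Thus for $|I| = s+1$ we must decide when $\Omega S^{2s+1}$ is homotopy commutative. For $|I| = 2$ we get $S^1$, whose loop space is the discrete group $\Z$ — but wait, this is homotopy commutative, so the cardinality-$2$ case cannot come from this retract argument alone; I would instead treat $|I|=2$ by noting that a minimal nonface of size $2$ means two facets of $P$ do not meet, which forces a different combinatorial structure, and handle it via the full $Z_{K(P)}$ having a nontrivial lowest Whitehead product in its cohomology (a product of two classes in $H^*(M)$), equivalently via the fact that $Z_{K(P)}$ then retracts onto $S^3\times S^3$ or admits a nontrivial triple Massey-type obstruction. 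Actually the cleaner route: a size-$2$ minimal nonface gives $K(P)_I = \partial\Delta^1 = S^0$, and $Z_{S^0} = S^3$ — two disjoint points give $D^2\times S^1 \cup S^1\times D^2 = S^3$. So in every case the relevant retract is an odd sphere $S^{2|I|-1}$, and the question becomes precisely: for which $t$ is $\Omega S^t$ homotopy commutative? By Ganea's computation (or James's), $\Omega S^t$ with $t$ odd is homotopy commutative only for $t = 1, 3, 7$ (the Hopf-invariant-one spheres, where $S^t$ is itself an H-space), and for $t$ even the Whitehead square $[\iota_t,\iota_t]$ is nonzero so $\Omega S^t$ is never homotopy commutative.

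Matching this against the allowed cardinalities: $|I| = 2 \Rightarrow t = 3$ (homotopy commutative — so this must be excluded, meaning I have the wrong retract, and in fact $Z_{S^0}=S^3$ is fine, contradiction). Let me reconsider: the honest statement is that a minimal nonface of size $2$, $3$, or $\ge 5$ is meant to be detected not through $Z_{\partial\Delta^{|I|-1}}$ alone but through a two-parameter sub-moment-angle-complex. The correct approach, which I would carry out, is: the minimal nonface $I$ together with a disjoint vertex (or a second minimal nonface, or the structure of $\partial\Delta^{|I|-1}$ within $K(P)$) produces a retract of $Z_{K(P)}$ of the form $Z_L$ where $L$ is a join $\partial\Delta^{p}*\partial\Delta^{q}$ or similar, giving $Z_L \simeq S^{2p+1}\times S^{2q+1}$ or $S^{2p+1}*S^{2q+1}$, in whose loop space there is a nontrivial Samelson product detected by the basic Whitehead product $[\iota_{2p+1},\iota_{2q+1}]$ — which is nonzero in $\pi_*(S^{2p+1}\vee S^{2q+1})$ for all $p,q\ge 1$. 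The numerology $2,3,\ge 5$ (excluding $4$) then has to be read off: cardinality $4$ is the one case where the relevant sphere is $S^7$, an H-space, killing the obstruction — this is exactly why $(\Delta^3)^n$, whose dual has all minimal nonfaces of size $4$, is singled out in Theorem \ref{main}. So the real content is: \textbf{(i)} produce from a minimal nonface of size $c\ne 4$ a retract of $Z_{K(P)}$ involving an odd sphere $S^{2c-1}$ with $2c-1 \notin\{1,3,7\}$, namely $2c-1 \in \{3,5\}$ wait $c=2$ gives $S^3$ again. I would resolve this tension by using that for $c=2,3$ one uses $Z_{K(P)}$ itself retracting onto a wedge/product involving $S^3$ and another cell coming from a \emph{ghost} vertex, yielding $\Omega(S^3\vee S^{2k+1})$ or $\Omega(S^3\times\text{something})$ in which the Samelson product $\langle\iota_3,\iota_{2k+1}\rangle$ is nonzero; and for $c\ge 5$ one uses that $Z_{\partial\Delta^{c-1}}=S^{2c-1}$ has $2c-1\ge 9$, never a Hopf sphere, so $\Omega S^{2c-1}$ is not homotopy commutative. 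The hard part is the cases $c=2,3$: there the bare retract $S^{2c-1}\in\{S^3,S^5\}$ is not enough for $c=2$, so I expect the main obstacle to be correctly identifying the auxiliary vertex/subcomplex that upgrades $S^3$ to a genuinely non-commutative loop space — concretely, showing that when $K(P)$ has a missing edge, the full complex $Z_{K(P)}$ retracts onto $Z_{K'}$ with $K'$ containing that missing edge plus enough extra structure (e.g. $K' = \partial\Delta^1 * \Delta^0$ is a cone, contributing nothing; one really needs $\partial\Delta^1*\partial\Delta^1 = S^0*S^0$, i.e. a $4$-cycle, giving $Z\simeq S^3\times S^3$) — and verifying such a configuration must exist inside $K(P)$ whenever $P$ is not a product of $\Delta^3$'s, which is presumably the substance deferred to the cited references or to the subsequent sections.
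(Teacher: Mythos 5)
Your reduction and your treatment of cardinalities $3$ and $\ge 5$ match the paper: $Z_{K(P)_I}=S^{2|I|-1}$ is a retract of $Z_{K(P)}$ by Lemma \ref{retract}, and $\Omega S^{2|I|-1}$ fails to be homotopy commutative because $2|I|-1\notin\{1,3,7\}$. But the cardinality-$2$ case is a genuine gap, and your proposed repairs would not work. First, your opening ``if and only if'' is false: Proposition \ref{decomposition} is only a homotopy equivalence, not an H-equivalence, so homotopy commutativity of $\Omega Z_{K(P)}$ does \emph{not} imply that of $\Omega M$. This is not a technicality --- it is exactly why the size-$2$ case cannot be settled inside $Z_{K(P)}$ at all. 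Take $P=\Delta^1\times\Delta^1$: then $K(P)$ is the $4$-cycle, its minimal nonfaces have size $2$, and $Z_{K(P)}=S^3\times S^3$, whose loop space \emph{is} homotopy commutative. Yet the lemma is true there, since $\Omega(\C P^1\times\C P^1)\simeq(\Omega S^2)^2$ is not homotopy commutative ($[\iota_2,\iota_2]\ne 0$). So no retract of $Z_{K(P)}$ can carry the obstruction. Your fallback --- locating a full subcomplex $\partial\Delta^1\ast\partial\Delta^1$ giving $S^3\times S^3$ --- fails twice over: such a full subcomplex need not exist (the pentagon has size-$2$ minimal nonfaces but no full $4$-cycle), and even when it does, $\Omega(S^3\times S^3)$ is homotopy commutative, so it detects nothing; the Samelson product of the two factor inclusions into a product is trivial.

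The paper's argument for $|I|=2$ abandons $Z_{K(P)}$ and works with $M$ itself, using the extra information carried by the base $BT^{m-n}$ of the fibration $Z_{K(P)}\to M\to BT^{m-n}$. The retract $S^3\subset Z_{K(P)}$ guarantees $H^3(Z_{K(P)};\Q)\ne 0$; these classes transgress to linearly independent \emph{quadratic} relations $q_1,\ldots,q_l$ among the degree-two generators of $H^*(M;\Q)$ (using \cite[Proposition 3.10]{DJ} that $H^*(M;\Q)$ is generated in degree two). The minimal Sullivan model of $M$ in dimensions $\le 4$ is then $(\Q[t_1,\ldots,t_{m-n}]\otimes\Lambda(x_1,\ldots,x_l),\,dx_i=q_i)$ with $q_i$ a nonzero quadratic form, and \cite[Proposition 13.16]{FHT} produces a nontrivial Whitehead product on $M$, hence $\Omega M$ is not homotopy commutative. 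You would need to add this (or an equivalent argument on $M$ rather than on $Z_{K(P)}$) to close the gap.
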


\begin{proof}
  By Proposition \ref{decomposition} and Lemma \ref{retract commutative}, it suffices to show $\Omega Z_{K(P)}$ is not homotopy commutative. Let $I$ be a minimal nonface of $K(P)$ of cardinality $k$. Then $Z_{K(P)_I}=Z_{\partial\Delta^{k-1}}=S^{2k-1}$, so by Lemma \ref{retract}, $S^{2k-1}$ is a retract of $Z_{K(P)}$. By \cite{Ad}, the Whitehead product $[1_{S^{2k-1}},1_{S^{2k-1}}]$ is nontrivial for $k=3$ and $k\ge 5$, so by the adjointness of Whitehead products and Samelson products \cite{S}, $\Omega S^{2k-1}$ is not homotopy commutative for $k=3$ and $k\ge 5$. Thus by Proposition \ref{decomposition} and Lemma \ref{retract commutative}, $\Omega Z_{K(P)}$ is not homotopy commutative either.

  Now we suppose $k=2$. Let $M$ be a quasitoric manifold over $P$. Since $S^3$ is a retract of $Z_{K(P)}$, $H^3(Z_{K(P)};\Q)$ has a basis $\{ u_1,\ldots,u_l\}$ for some $l\ge 1$. By Proposition \ref{M(A)}, there is a homotopy fibration
  \[
    Z_{K(P)}\to M\to BT^{m-n},
  \]
  where $m$ is the number of facets of $P$ and $n=\dim P$. By \cite[Theorem 3.4.7]{BP}, $Z_{K(P)}$ is 2-connected, so in the Serre spectral sequence of the above homotopy fibration, each $u_i$ is transgressive. Moreover, by \cite[Proposition 3.10]{DJ}, $H^*(M;\Q)$ is generated by elements of degree two, so the transgression images of $u_1,\ldots,u_l$ are linearly independent. Then we get
  \[
    H^*(M;\Q)=\Q[t_1,\ldots,t_{m-n}]/(q_1,\ldots,q_l),\quad|t_i|=2
  \]
  for $*\le 5$, where $q_i$ is the transgression image of $u_i$. This readily implies that the minimal Sullivan model for $M$ is given by
  \[
    (\Q[t_1,\ldots,t_{m-n}]\otimes\Lambda(x_1,\ldots,x_l),d),\quad dt_i=0,\quad dx_i=q_i
  \]
  in dimension $\le 4$. Since $|q_i|=4$, $q_i$ is a quadratic polynomial in $t_1,\ldots,t_{m-n}$. Thus by \cite[Proposition 13.16]{FHT}, $M$ has nontrivial Whitehead product, implying $\Omega M$ is not homotopy commutative. Therefore the proof is finished.
\end{proof}

\begin{lemma}
  \label{nondisjoint}
  Let $P$ be a simple polytope. If $K(P)$ has intersecting distinct minimal nonfaces, then the loop space of a quasitoric manifold over $P$ is not homotopy commutative.
\end{lemma}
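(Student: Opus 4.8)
The plan is to reduce to the previous two lemmas (Lemma \ref{minimal nonface} and Lemma \ref{retract commutative}) by a careful case analysis on how two intersecting distinct minimal nonfaces $I$ and $J$ of $K(P)$ can sit together. First, if either $I$ or $J$ has cardinality $2$, $3$ or $\ge 5$, then Lemma \ref{minimal nonface} applies directly and there is nothing to prove; so we may assume $|I|=|J|=4$. Since $I$ and $J$ are distinct minimal nonfaces neither can contain the other, so $|I\cap J|\in\{1,2,3\}$. The goal is to find, inside $K(P)$, a full subcomplex on some vertex set $L\subseteq I\cup J$ that retracts (as a moment-angle complex) onto a space whose loop space is already known not to be homotopy commutative — ideally an odd sphere $S^{2k-1}$ with $k=3$ or $k\ge 5$, or something handled by the Sullivan-model argument in the proof of Lemma \ref{minimal nonface}.

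The key structural point I would exploit is this: because $I$ is a minimal nonface, $K(P)_I=\partial\Delta^{|I|-1}$, and similarly for $J$; hence on the vertex set $L=I\cup J$ the subcomplex $K(P)_L$ contains every simplex of $\partial\Delta^{|I|-1}$ and every simplex of $\partial\Delta^{|J|-1}$, but is missing $I$ and $J$ themselves. I would then try to locate a third minimal nonface of $K(P)_L$ — in particular, since $I\cup J$ is not itself forced to be a face, and subsets of $I\cup J$ of the ``wrong'' shape (neither contained in $I$ nor in $J$) need not be faces, there must be a minimal nonface $I'$ of $K(P)_L$ with $I\cap J\subsetneq I'$ and $I'\not\subseteq I$, $I'\not\subseteq J$. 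Concretely, pick $v\in I\setminus J$ and $w\in J\setminus I$; the set $(I\cap J)\cup\{v,w\}$ is not a simplex of $K(P)$ unless it lies in $I$ or $J$, which it does not since it contains both $v$ and $w$. So it contains a minimal nonface $I'$ with $v,w\in I'$. The cardinality of $I'$ is at most $|I\cap J|+2\le 5$ and at least $2$. If $|I'|\ne 4$ we are done by Lemma \ref{minimal nonface} (applied to $P$, using that $I'$ is a minimal nonface of $K(P)$, not merely of a subcomplex — which holds because minimal nonface is a property detected on full subcomplexes). The remaining case $|I'|=4$ forces $|I\cap J|\in\{2,3\}$ and $I'\subseteq (I\cap J)\cup\{v,w\}$ with $|I'|=4$; iterating the argument, or choosing $v,w$ more cleverly so as to land on a nonface of cardinality $\ne 4$, should close the loop.

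The hard part will be verifying that the final ``$|I'|=4$ always'' scenario cannot persist — i.e.\ ruling out a configuration in which every minimal nonface one produces this way again has cardinality exactly $4$. I expect this to come down to a small combinatorial pigeonhole argument: with $|I|=|J|=4$ and $|I\cap J|\le 3$ one has at most $8$ vertices in play, and the constraint that $K(P)$ is an $(n-1)$-sphere (hence Cohen–Macaulay, with strong restrictions on its minimal nonfaces, e.g.\ they cannot all be $4$-element sets overlapping in a prescribed pattern without forcing a $2$- or $3$-element nonface somewhere) should force the existence of a small minimal nonface. Alternatively, one can bypass the sphere hypothesis entirely and argue directly: when $|I\cap J|=3$, say $I=\{a,b,c,d\}$, $J=\{a,b,c,e\}$, the $3$-set $\{a,b,d\}$ is a face (subset of $I$), $\{a,b,e\}$ is a face, but $\{a,b,d,e\}$ being a face would (together with minimality of $I$ and $J$) be consistent — so instead look at whether $\{c,d,e\}$ or $\{d,e\}$ is a nonface; if $\{d,e\}$ is a nonface we are in the cardinality-$2$ case of Lemma \ref{minimal nonface}, and if it is a face then $\{a,d,e\},\{b,d,e\},\{c,d,e\}$ and hence the octahedral-type subcomplex on $\{a,b,c,d,e\}$ minus $I,J$ retracts onto a join of spheres whose loop space fails to be homotopy commutative. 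Pushing this bookkeeping through the three values $|I\cap J|=1,2,3$ is the real content of the proof; each case should ultimately either exhibit a minimal nonface of cardinality $\ne 4$ (invoking Lemma \ref{minimal nonface}) or produce an explicit odd-sphere retract via Lemma \ref{retract}, and then Lemma \ref{retract commutative} finishes it.
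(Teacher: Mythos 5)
There is a genuine gap, and it occurs at the very first ``key structural point.'' You claim that for $v\in I\setminus J$ and $w\in J\setminus I$, the set $(I\cap J)\cup\{v,w\}$ ``is not a simplex of $K(P)$ unless it lies in $I$ or $J$.'' That is not a valid inference: a subset of the vertex set is a nonface if and only if it \emph{contains} some minimal nonface, and there is no reason a set that is contained in neither $I$ nor $J$ must contain a third minimal nonface --- it may perfectly well be a face. Indeed, the full subcomplex $K(P)_{I\cup J}$ could a priori have $I$ and $J$ as its \emph{only} minimal nonfaces (e.g.\ $I=\{1,2,3,4\}$, $J=\{4,5,6,7\}$, with every subset of $I\cup J$ containing neither $I$ nor $J$ declared a face), in which case no third minimal nonface $I'$ exists at all and your entire reduction to Lemma \ref{minimal nonface} collapses. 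You acknowledge that ruling out the ``every $I'$ has cardinality $4$'' scenario is ``the real content of the proof'' and hope the simplicial-sphere condition will rescue it, but you do not carry this out, and the fallback claim that the relevant subcomplex ``retracts onto a join of spheres whose loop space fails to be homotopy commutative'' is asserted without justification. So the proposal is an outline of a strategy whose central step is both unproved and, as stated, false.

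For comparison, the paper's proof needs no case analysis, no third nonface, and no use of the sphere condition. It works directly with $Z_{K(P)_{I_1\cup I_2}}$: the two minimal nonfaces give sphere retracts $\iota_k\colon S^{2|I_k|-1}\to Z_{K(P)_{I_1\cup I_2}}$ with cohomology classes $u_1,u_2$ pulling back to generators. If the Whitehead product $[\iota_1,\iota_2]$ were trivial, $\iota_1+\iota_2$ would extend over the product of the two spheres, forcing $u_1u_2\ne 0$; but $|u_1u_2|=2(|I_1|+|I_2|)-2$ exceeds $\dim Z_{K(P)_{I_1\cup I_2}}\le 2|I_1\cup I_2|-1$ precisely because $|I_1\cap I_2|>0$ and $K(P)_{I_1\cup I_2}$ is not a full simplex. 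Hence $[\iota_1,\iota_2]\ne 0$, and Lemmas \ref{retract} and \ref{retract commutative} together with Proposition \ref{decomposition} finish the argument. If you want to salvage your approach you would have to either prove the combinatorial claim using the sphere hypothesis (which is essentially proving Proposition \ref{polytope} before this lemma is available) or switch to a dimension-counting argument of this kind.
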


\begin{proof}
  Let $I_1,I_2$ be minimal nonfaces of $K(P)$ with $I_1\ne I_2$ and $I_1\cap I_2\ne\emptyset$. Let $|I_1\cap I_2|=j>0$ and $|I_k|=i_k+j$ for $k=1,2$. Then for $k=1,2$,
  \[
    Z_{K(P)_{I_k}}=S^{2(i_k+j)-1}
  \]
  Let $\iota_k\colon Z_{K(P)_{I_k}}\to Z_{K(P)_{I_1\cup I_2}}$ denote the inclusion, and let $v_k$ be a generator of $H^{2(i_k+j)-1}(Z_{K(P)_{I_k}})\cong\Z$. Then by Lemma \ref{retract}, there is $u_k\in H^{2(i_k+j)-1}(Z_{K(P)_{I_1\cup I_2}})$ satisfying $\iota_k^*(u_k)=v_k$ for $k=1,2$. Now we assume that the Whitehead product $[\iota_1,\iota_2]$ is trivial. Then there is a homotopy commutative diagram
  \[
    \xymatrix{
      Z_{K(P)_{I_1}}\vee Z_{K(P)_{I_2}}\ar[r]^(.57){\iota_1+\iota_2}\ar[d]&Z_{K(P)_{I_1\cup I_2}}\ar@{=}[d]\\
      Z_{K(P)_{I_1}}\times Z_{K(P)_{I_2}}\ar[r]^(.57)\mu&Z_{K(P)_{I_1\cup I_2}}.
    }
  \]
  Hence $\mu^*(u_1)=v_1\times 1$ and $\mu^*(u_2)=1\times v_2$, so
  \[
    \mu^*(u_1u_2)=\mu^*(u_1)\mu^*(u_2)=v_1\times v_2\ne 0.
  \]
  Thus we get $u_1u_2\ne 0$. On the other hand, since $K(P)_{I_1\cup I_2}$ has at least two minimal nonfaces, it is not a full simplex, implying $\dim Z_{K(P)_{I_1\cup I_2}}\le 2(i_1+i_2+j)-1$. Then $|u_1u_2|=2(i_1+i_2+j)-2+2j>2(i_1+i_2+j)-1\ge \dim Z_{K(P)_{I_1\cup I_2}}$ as $j>0$, so we get $u_1u_2=0$, a contradiction. Thus the Whitehead product $[\iota_1,\iota_2]$ is nontrivial, so $\Omega Z_{K(P)_{I_1\cup I_2}}$ is not homotopy commutative. Therefore by Lemma \ref{retract commutative}, $\Omega Z_{K(P)}$ is not homotopy commutative too, completing the proof.
\end{proof}

Now we are ready to prove:

\begin{proposition}
  \label{polytope}
  Let $M$ be a quasitoric manifold over a simple polytope $P$. If the loop space of $M$ is homotopy commutative, then $P=(\Delta^3)^n$.
\end{proposition}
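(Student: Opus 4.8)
The plan is to combine the two preceding lemmas with a structural understanding of which simplicial spheres $K(P)$ can arise so as to force $K(P)$ to be the boundary of the dual polytope of $(\Delta^3)^n$, which is exactly $\partial(\Delta^2)^{*n}$ — the join of $n$ copies of the boundary of a triangle, i.e. the octahedron-like join whose minimal nonfaces are $n$ pairwise disjoint triples. Concretely, suppose $\Omega M$ is homotopy commutative. By Lemma \ref{minimal nonface}, every minimal nonface of $K(P)$ has cardinality $4$, since cardinalities $2$, $3$, or $\ge 5$ are all excluded (cardinality $1$ is impossible for a polytopal sphere with more than one vertex). By Lemma \ref{nondisjoint}, distinct minimal nonfaces of $K(P)$ must be pairwise disjoint. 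So the minimal nonfaces of $K(P)$ form a collection of pairwise disjoint $4$-element subsets of the vertex set.

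Next I would translate this into a statement about $P$ itself via the standard dictionary between $K(P)$ and $P$: a minimal nonface of cardinality $k$ in $K(P)$ corresponds to a set of $k$ facets of $P$ whose total intersection is empty but all of whose proper subcollections intersect. I would then invoke the fact — a consequence of the Stanley–Reisner description of $K(P)$, or more directly of the combinatorics of simple polytopes — that since $K(P)$ is a flag-type sphere whose minimal nonfaces are pairwise disjoint $4$-sets, $K(P)$ is the join $K_1 * \cdots * K_r$ where each $K_j$ is the simplicial complex on its $4$-element vertex set having that set as its unique minimal nonface, namely $K_j = \partial\Delta^3$. Dually this says $P = P_1 \times \cdots \times P_r$ with each $K(P_j) = \partial\Delta^3$, hence each $P_j = \Delta^3$, because $\Delta^3$ is the unique simple polytope whose dual boundary is $\partial\Delta^3$ (the dual of $\partial\Delta^3$ as a simplicial polytope is again a simplex). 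Therefore $P = (\Delta^3)^n$ with $n = r$, and since each of these steps preserves the hypothesis, we are done.

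The main obstacle I expect is the combinatorial step asserting that a simplicial sphere whose minimal nonfaces are pairwise disjoint $4$-sets must be the join of the corresponding copies of $\partial\Delta^3$. One direction is formal: if $\mathrm{MF}(K) = \{I_1, \dots, I_r\}$ with the $I_j$ pairwise disjoint, then the Stanley–Reisner ideal factors, $K = \partial\Delta^3 * \cdots * \partial\Delta^3 * \Delta^{(\text{rest})}$, and a join of a sphere with anything is a sphere only if the "rest" is itself a (possibly empty) simplex, which together with counting the dimension of $K(P) = n - 1$ versus the dimension $4r - r - 1 = 3r-1$ of the join forces no leftover vertices and $n = r$. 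The subtlety is just ensuring the vertex set of $K(P)$ is exhausted by $I_1 \cup \cdots \cup I_r$: a vertex $v$ in none of the $I_j$ lies in every minimal nonface's complement, so $\{v\}$ together with any simplex is a simplex, i.e. $v$ is a cone point; but a simplicial sphere of dimension $\ge 1$ has no cone points, so no such $v$ exists. I would write this out carefully, as it is the one place where the argument genuinely uses that $K(P)$ is a sphere and not merely an arbitrary complex. The remaining identifications ($K(P_j) = \partial\Delta^3 \Rightarrow P_j = \Delta^3$, and joins $\leftrightarrow$ products) are standard and can be cited from \cite{BP} or \cite{DJ}.
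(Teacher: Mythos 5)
Your argument is correct and is essentially the paper's own proof: both deduce from Lemmas \ref{minimal nonface} and \ref{nondisjoint} that the minimal nonfaces of $K(P)$ are pairwise disjoint $4$-sets, write $K(P)$ as a join of copies of $\partial\Delta^3$ with a residual simplex, and use that $K(P)$ is a sphere to kill the residual simplex; you merely spell out the cone-point step that the paper leaves implicit. (One cosmetic slip: calling $K(P)$ ``flag-type'' is inaccurate here, since its minimal nonfaces have cardinality $4$, but nothing in your argument uses flagness.)
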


\begin{proof}
  Suppose $\Omega M$ is homotopy commutative. Then by Lemmas \ref{minimal nonface} and \ref{nondisjoint}, minimal nonfaces of $K(P)$ are of cardinality $4$ and pairwise disjoint. Then
  \[
    K(P)=\underbrace{\partial\Delta^3\star\cdots\star\partial\Delta^3}_n\star\Delta^l
  \]
  for some $l\ge -1$, where $\Delta^{-1}=\{\emptyset\}$. Since $K(P)$ is a simplicial sphere, we have $l=-1$, so
  \[
    K(P)=\underbrace{\partial\Delta^3\star\cdots\star\partial\Delta^3}_n.
  \]
  Thus we obtain $P=(\Delta^3)^n$, as stated.
\end{proof}

We further consider a condition equivalent to the loop space of a quasitoric manifold over $(\Delta^3)^n$ being homotopy commutative. As in the proof of Proposition \ref{polytope}, if $P=(\Delta^3)^n$, then $K(P)$ is the join of $n$ copies of $\partial\Delta^3$, implying
\[
  Z_{K(P)}=(S^7)^n.
\]
Let $M$ be a quasitoric manifold over $(\Delta^3)^n$. Then by Proposition \ref{decomposition}, there is a homotopy equivalence
\[
  \Omega M\simeq(S^1)^n\times(\Omega S^7)^n
\]
which is not necessarily an H-equivalence. For $i=1,\ldots,n$, let $a_i\colon S^1\to\Omega M$ and $b_i\colon S^6\to\Omega M$ be the composite maps
\[
  S^1\xrightarrow{g_i}(S^1)^n\to\Omega M\quad\text{and}\quad S^6\xrightarrow{E}\Omega S^7\xrightarrow{g_i}(\Omega S^7)^n\to\Omega M,
\]
where $g_i$ and $E$ denote the $i$-th inclusion and the suspension map, respectively.

\begin{lemma}
  \label{equivalent condition}
  Let $M$ be a quasitoric manifold over $(\Delta^3)^n$. The loop space of $M$ is homotopy commutative if and only if the Samelson products $\langle a_i,b_j\rangle$ for $i,j=1,\ldots,n$ are trivial.
\end{lemma}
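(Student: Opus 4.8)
The plan is to exploit the homotopy equivalence $\Omega M\simeq(S^1)^n\times(\Omega S^7)^n$ from Proposition \ref{decomposition}, and to reduce homotopy commutativity of $\Omega M$ to the vanishing of the finitely many Samelson products $\langle a_i,b_j\rangle$. The ``only if'' direction is immediate: if $\Omega M$ is homotopy commutative, then every Samelson product of maps into $\Omega M$ vanishes, in particular $\langle a_i,b_j\rangle=0$ for all $i,j$. So the substance is the ``if'' direction.

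For the ``if'' direction, first I would record that $\Omega M$ is homotopy commutative if and only if the Samelson product $\langle 1_{\Omega M},1_{\Omega M}\rangle\colon \Omega M\wedge\Omega M\to\Omega M$ is null-homotopic. Since $\Omega M$ (via the decomposition) is built from $n$ circles and $n$ copies of $\Omega S^7$, and the commutator map is bilinear up to homotopy, $\langle 1,1\rangle$ is detected by its restrictions to the smash products of pairs of factors. Thus it suffices to show that the Samelson product of any two of the ``generating'' maps $S^1\to\Omega M$ and $S^6\xrightarrow{E}\Omega S^7\to\Omega M$ is trivial. There are four types of pairs to consider. The circle-circle products $\langle a_i,a_j\rangle\colon S^1\wedge S^1=S^2\to\Omega M$ land in $\pi_2(\Omega M)\cong\pi_3(M)$; since $M$ is simply connected with $H_2$ torsion-free and $H_3=0$ (a quasitoric manifold has cells only in even dimensions), $\pi_3(M)$ is determined and one checks these vanish — more directly, the $(S^1)^n$ factor maps into $\Omega M$ through a connected abelian subgroup (the image of $\Omega BT^{m-n}=T^{m-n}$), hence these Samelson products are trivial on the nose. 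The $\langle b_i,b_j\rangle$ products land in $\pi_{12}(\Omega M)=\pi_{13}(M)$; here I would use that $b_i$ factors through $\Omega S^7$ and that the restriction to a single $(S^7)$-factor is governed by $\Omega S^7$, which \emph{is} homotopy commutative (it is a loop space on a sphere of the form $S^{2k-1}$ with $k=4$, and by Ganea's computation $\Omega S^7$ is homotopy commutative), together with the fact that for $i\ne j$ the two factors commute by the standard bilinearity/retraction argument applied to $Z_{K(P)}=(S^7)^n$, whose loop space $(\Omega S^7)^n$ is homotopy commutative. So the $\langle b_i,b_j\rangle$ all vanish regardless of hypotheses. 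This leaves exactly the mixed products $\langle a_i,b_j\rangle\colon S^1\wedge S^6=S^7\to\Omega M$, and the asserted equivalence is precisely that $\Omega M$ is homotopy commutative iff these vanish.

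The key steps, in order, are: (1) state the criterion that $\Omega M$ is homotopy commutative iff $\langle1,1\rangle\simeq *$; (2) decompose $\langle1,1\rangle$ via the homotopy equivalence $\Omega M\simeq(S^1)^n\times(\Omega S^7)^n$ into its ``matrix entries'', i.e. the Samelson products of the generating maps, using the bilinearity of Samelson products and the fact that a map out of a product H-space is null iff its restrictions to the sub-H-spaces and to the pairwise smashes are — this is where I would need a clean lemma that homotopy commutativity of a product $X\times Y$ of H-groups is equivalent to homotopy commutativity of $X$, of $Y$, and triviality of $\langle j_X,j_Y\rangle$ and $\langle j_Y,j_X\rangle$; (3) dispatch the $a_i$–$a_j$ pairs using that the torus factor sits inside $\Omega M$ as a genuine abelian topological subgroup; (4) dispatch the $b_i$–$b_j$ pairs using that $\Omega Z_{K(P)}=(\Omega S^7)^n$ is homotopy commutative (Ganea) and is a retract H-subgroup of $\Omega M$ via the section of Proposition \ref{decomposition}; and (5) conclude that the only remaining obstructions are the $\langle a_i,b_j\rangle$. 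The anti-symmetry $\langle a_i,b_j\rangle\simeq *\iff\langle b_j,a_i\rangle\simeq *$ of Samelson products means we need not separately track the transposed pairs.

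The main obstacle I anticipate is step (2): making precise that the full commutator map on $\Omega M$ is ``assembled'' from the Samelson products of the chosen generators, despite the decomposition $\Omega M\simeq(S^1)^n\times(\Omega S^7)^n$ being only a homotopy equivalence and \emph{not} an H-equivalence. The honest way to handle this is to note that the inclusion $(S^1)^n\to\Omega M$ is an H-map (it is $\Omega$ of $M\to BT^{m-n}$'s section data, or rather the $T^{m-n}\hookrightarrow\Omega M$ coming from the H-fibration splitting), and that the H-group $\Omega M$ is generated, as an H-space up to homotopy, by the images of $(S^1)^n$ and of $(\Omega S^7)^n$; then bilinearity of Samelson products in each variable reduces $\langle1,1\rangle$ to the generating Samelson products. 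One must be slightly careful that $\Omega S^7$ is itself not a wedge of spheres, but since $\langle a_i,b_j\rangle$ is being asked to vanish for the specific map $b_j$ coming from $S^6\xrightarrow{E}\Omega S^7$, and since $E$ is (rationally, and through a range) a generator, naturality of the Samelson product in the first variable over $\Omega S^7$ — i.e. $\langle a_i,-\rangle$ is an H-map out of $\Omega S^7$ — lets one promote triviality on the generator $b_j$ to triviality of $\langle a_i,j_{(\Omega S^7)_j}\rangle$. Once that bookkeeping is in place the proof is short.
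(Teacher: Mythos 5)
Your proposal follows essentially the same route as the paper: reduce homotopy commutativity of $\Omega M$ to the pairwise Samelson products of the maps generating the factors of $(S^1)^n\times(\Omega S^7)^n$, observe that the $a_i$--$a_j$ and $b_i$--$b_j$ products vanish unconditionally (the former since $\pi_3(M)\cong\pi_3((S^7)^n)=0$, the latter by lifting to the homotopy commutative $(\Omega S^7)^n$), and identify the mixed products as the only obstruction. The two steps you single out as requiring real work are exactly the paper's two citations: the reduction of $\langle 1_{\Omega M},1_{\Omega M}\rangle$ to the generating Samelson products is \cite[Proposition 1]{KK}, and the equivalence $\langle a_i,\bar b_j\rangle=0\Leftrightarrow\langle a_i,b_j\rangle=0$ (where $\bar b_j\colon\Omega S^7\to\Omega M$ is the inclusion of a factor and $b_j=\bar b_j\circ E$) is \cite[Lemma 2.1]{G}. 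One caveat on your justification of the latter: $\langle a_i,-\rangle$ is \emph{not} an H-map out of $\Omega S^7$, since commutation with a fixed element is not a homomorphism; the honest argument (Ganea's) uses the James splitting of $\Sigma\Omega S^7$ to identify the restriction of the commutator to $S^1\wedge(S^6)^{\wedge k}$ with an iterated Samelson product $\langle\langle\cdots\langle a_i,b_j\rangle\cdots\rangle,b_j\rangle$, which vanishes once $\langle a_i,b_j\rangle$ does. The same wrinkle affects your step (2) --- the commutator of products expands into \emph{conjugates} of pairwise and iterated commutators rather than literally bilinearly --- but conjugates of null maps are null, so the conclusion stands; this bookkeeping is precisely what \cite[Proposition 1]{KK} packages.
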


\begin{proof}
  For $i=1,\ldots,n$, let $\bar{b}_i\colon\Omega S^7\to\Omega M$ denote the composite of the $i$-th inclusion $\Omega S^7\to(\Omega S^7)^n$ and the natural map $(\Omega S^7)^n\to\Omega M$. By \cite[Proposition 1]{KK}, $\Omega M$ is homotopy commutative if and only if the Samelson products $\langle a_i,a_j\rangle,\,\langle a_i,\bar{b}_j\rangle,\,\langle\bar{b}_i,\bar{b}_j\rangle$ for $i,j=1,\ldots,n$ are trivial. Clearly, $\langle a_i,a_j\rangle$ are trivial. By \cite[Lemma 2.1]{G}, $\langle a_i,\bar{b}_j\rangle=0$ if and only if $\langle a_i,b_j\rangle=0$, and $\langle\bar{b}_i,\bar{b}_j\rangle=0$ if and only if $\langle b_i,b_j\rangle=0$. Note that each $b_i\colon S^6\to\Omega M$ lifts to a map $\tilde{b}_i\colon S^6\to(\Omega S^7)^n$. Then the Samelson products $\langle b_i,b_j\rangle$ in $\Omega M$ lift to the Samelson prodsucts $\langle\tilde{b}_i,\tilde{b}_j\rangle$ in $(\Omega S^7)^n$. Hence since $(\Omega S^7)^n$ is homotopy commutative, $\langle\tilde{b}_i,\tilde{b}_j\rangle$ are trivial, implying so are $\langle b_i,b_j\rangle$. Thus the proof is finished.
\end{proof}


\section{Computation of Whitehead products}\label{Computation of Whitehead products}

In this section, we extend the method of Barrat, James, and Stein \cite{BJS} computing Whitehead products. The coefficients of cohomology will be the integers $\Z$.

Let $X$ be a simply-connected finite complex satisfying a homotopy fibration
\begin{equation}
  \label{fibration}
  (S^{2d-1})^n\xrightarrow{\phi}X\xrightarrow{\pi}(\C P^\infty)^n
\end{equation}
for $d\ge 3$ such that
\[
  H^*(X)=\Z[t_1,\ldots,t_n]/(q_1,\ldots,q_n),\quad|t_i|=2,\,|q_i|=2d,
\]
where for $i=1,\ldots,n$, $t_i$ corresponds to the fundamental class of the $i$-th $\C P^\infty$ in $(\C P^\infty)^n$ and $q_i\in\Z[t_1,\ldots,t_n]$ is the transgression image of a generator of $H^{2d-1}(S^{2d-1})$ for the $i$-th $S^{2d-1}$ in $(S^{2d-1})^n$.

\begin{lemma}
  \label{regular sequence}
  The sequence $q_1,\ldots,q_n$ in $\Z[t_1,\ldots,t_n]$ is regular.
\end{lemma}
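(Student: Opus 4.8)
The plan is to exploit the fact that $H^*(X)$ is a finite-dimensional $\Z$-module (since $X$ is a finite complex) together with the classical algebraic criterion for regularity in a polynomial ring over a Noetherian ring. Concretely, let $R = \Z[t_1,\ldots,t_n]$ and $I = (q_1,\ldots,q_n)$, so that $H^*(X) = R/I$. First I would observe that $R/I$ has finite total dimension over $\Z$: this follows because $X$ is a finite complex, so $H^*(X)$ is finitely generated in each degree and vanishes above $\dim X$. Hence $R/I$ is a finitely generated $\Z$-module, in particular has Krull dimension $1$ (it surjects onto its torsion-free part which, if nonzero, is a finite-rank free $\Z$-module; one checks $R/I \neq 0$ since $1 \notin I$ by degree reasons, as each $q_i$ has degree $2d \geq 6 > 0$).

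Second, I would bring in the standard commutative-algebra fact (e.g.\ from Matsumura or Bruns–Herzog) that in a Cohen–Macaulay ring — and $R = \Z[t_1,\ldots,t_n]$ is Cohen–Macaulay of Krull dimension $n+1$ — a sequence of homogeneous elements $q_1,\ldots,q_n$ is regular if and only if $\dim R/(q_1,\ldots,q_n) = \dim R - n = 1$, i.e.\ if and only if the codimension drops by exactly one at each step, which for a sequence of length $n$ in a CM ring is equivalent to $\dim R/I \leq n+1-n = 1$ (the reverse inequality $\dim R/I \geq n+1-n$ always holds by Krull's height theorem). So the whole problem reduces to the dimension count $\dim R/I \leq 1$, which is exactly what finiteness of $H^*(X)$ over $\Z$ gives.

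Alternatively — and this may be cleaner to write — I would argue rationally first and then handle torsion. Over $\Q$, set $R_\Q = \Q[t_1,\ldots,t_n]$; then $H^*(X;\Q) = R_\Q/(q_1,\ldots,q_n)$ is finite-dimensional over $\Q$, so the $q_i$ cut out only the origin in $\mathbb{A}^n_\Q$, hence form a system of parameters in the $n$-dimensional CM ring $R_\Q$ and are therefore a regular sequence there. To upgrade to $\Z$-coefficients, note that a homogeneous sequence in $\Z[t_1,\ldots,t_n]$ is regular provided it is regular after tensoring with $\Q$ \emph{and} the quotient $R/I$ is $\Z$-torsion-free in the relevant range — but this last point is delicate, so I would instead prefer the direct Krull-dimension argument of the previous paragraph, which sidesteps torsion entirely: finiteness of $H^*(X)$ as an abelian group forces $\dim_{\text{Krull}} R/I \le 1$, and that is all that is needed.

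The main obstacle I anticipate is being careful about working over $\Z$ rather than a field: the slick "finite-dimensional quotient of a polynomial ring implies the relations form a system of parameters" argument is a field statement, and over $\Z$ one must phrase everything in terms of Krull dimension and the Cohen–Macaulayness of $\Z[t_1,\ldots,t_n]$ (which has dimension $n+1$, not $n$). Once that bookkeeping is set up correctly, the proof is short. A secondary point to verify carefully is that $R/I$ is genuinely nonzero and that its Krull dimension is exactly $1$ rather than $0$ — but $R/I \twoheadrightarrow H^*(X)\otimes\Q = H^*(X;\Q)$ contains $H^0 = \Q$, and $R/I$ also surjects onto $\Z = H^0(X)$, so it has a quotient isomorphic to $\Z$, giving Krull dimension at least $1$; combined with the upper bound this pins it to $1$ and completes the regularity argument.
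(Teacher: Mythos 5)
Your proposal is correct, and it runs on the same engine as the paper's proof --- Cohen--Macaulayness plus a dimension count showing that $q_1,\ldots,q_n$ is a system of parameters --- but you execute it directly over $\Z$ where the paper reduces to fields. The paper tensors with an arbitrary field $\mathbb{F}$, observes that $H^*(X)\otimes\mathbb{F}$ has Krull dimension $0$ over the $n$-dimensional Cohen--Macaulay ring $\mathbb{F}[t_1,\ldots,t_n]$, concludes regularity over every field, and then passes to $\Z$; that last base-change step (in effect, vanishing of Koszul homology checked over $\Q$ and every $\mathbb{F}_p$) is precisely the ``delicate'' point you identified and deliberately sidestepped. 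Your direct route instead uses $\dim\Z[t_1,\ldots,t_n]=n+1$ and $\dim R/I=1$, which trades that lifting step for a different subtlety: the criterion ``an ideal of height $n$ generated by $n$ elements in a Cohen--Macaulay ring is generated by a regular sequence'' is stated in Matsumura and Bruns--Herzog for \emph{local} rings, and $\Z[t_1,\ldots,t_n]$ is neither local nor graded-local, since its degree-zero part is $\Z$. This is repairable rather than fatal: $R=\Z[t_1,\ldots,t_n]$ is a catenary domain with $\operatorname{ht}\mathfrak{p}+\dim R/\mathfrak{p}=n+1$ for every prime $\mathfrak{p}$, so $\dim R/I=1$ forces $\operatorname{ht}I=n$; each successive quotient $R/(q_1,\ldots,q_{i-1})$ is locally Cohen--Macaulay, hence has no embedded primes and all its associated primes have height $i-1$, so a zerodivisor $q_i$ would trap $I$ inside a prime of height at most $n-1$, a contradiction. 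You should write out this inductive step (or localize suitably) rather than cite the local theorem verbatim. With that caveat, both arguments are sound and prove the same thing; yours avoids the unexplained field-to-$\Z$ descent at the cost of a short global-versus-local argument, while your correctly flagged nonvanishing checks ($1\notin I$, $R/I\twoheadrightarrow H^0(X)=\Z$) pin the dimension to exactly $1$.
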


\begin{proof}
  For any field $\mathbb{F}$, $\mathbb{F}[t_1,\ldots,t_n]$ is Cohen-Macaulay, and the Krull dimension of $H^*(X)\otimes\mathbb{F}$ is zero as $X$ is a finite complex. Then the sequence $q_1,\ldots,q_n$ is regular in $\mathbb{F}[t_1,\ldots,t_n]$ for any field $\mathbb{F}$, so the sequence $q_1,\ldots,q_n$ is regular in $\Z[t_1,\ldots,t_n]$ too, as stated.
\end{proof}

We consider the cofiber $Y$ of the map $\phi\colon(S^{2d-1})^n\to X$. For $i=1,\ldots,n$, let $\beta_i\colon S^{2d-1}\to X$ be the composite
\[
  S^{2d-1}\xrightarrow{i\text{-th incl}}(S^{2d-1})^n\xrightarrow{\phi}X.
\]
Then by degree reasons,
\begin{equation}
  \label{cell Y}
  Y_{4d-2}=X_{4d-2}\cup_{\beta_1}e^{2d}\cup_{\beta_2}\cdots\cup_{\beta_n}e^{2d},
\end{equation}
where $Y_k$ denotes the $k$-skeleton of $Y$.

\begin{lemma}
  \label{Y}
  For $*\le 2d+3$,
  \[
    H^*(Y)=\Z[t_1,\ldots,t_n]/(t_iq_j\mid i,j=1,\ldots,n).
  \]
\end{lemma}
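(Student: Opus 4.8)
The plan is to compute $H^*(Y)$ in the stated range directly from the cofibration $(S^{2d-1})^n \xrightarrow{\phi} X \to Y$ together with the cell structure \eqref{cell Y}. First I would set up the long exact sequence of the pair $(Y,X)$, or equivalently the Puppe sequence of $\phi$, and identify $H^*(Y,X)$ with $H^*\big(\bigvee_n S^{2d}\big) = \bigoplus_{i=1}^n \Z\langle s_i\rangle$ with $|s_i| = 2d$, valid through dimension $4d-2$ by \eqref{cell Y}. The connecting map $\delta\colon H^{2d-1}(X) \to H^{2d}(Y,X)$ is dual to $\phi$ on bottom cells, and since $H^{2d-1}(X) = 0$ (because $H^*(X)$ is concentrated in even degrees), the map $H^{2d}(Y) \to H^{2d}(X)$ is injective; a dimension count shows it is onto a complement, so in degrees $\le 2d$ we get $H^*(Y) \cong H^*(X)$, which matches the right-hand side since the ideal $(t_iq_j)$ lives in degrees $\ge 2d+2$.

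The crux is degrees $2d+1$ through $2d+3$. Here I would run the long exact sequence
\[
  \cdots \to H^{*-1}(X) \xrightarrow{\beta^*} H^*(Y,X) \to H^*(Y) \to H^*(X) \xrightarrow{\beta^*} H^{*+1}(Y,X) \to \cdots,
\]
where $H^*(Y,X) \cong \bigoplus_i \Z\langle s_i \rangle \cdot \Z[t_1,\ldots,t_n]$ in the relevant range (the relative cohomology of $Y/X = \bigvee_i S^{2d}$ as a module, using that the $2d$-cells are attached along $\beta_i$ and higher cells come from $X$). The key computation is that the map $H^{2d-1+2k}(X) \to H^{2d+2k}(Y,X)$ sends the class represented by a polynomial $p(t)$ (more precisely, the corresponding element after identifying via the cell structure) to $\sum_i (t_i p)\, s_i$ — equivalently, the attaching maps $\beta_i$ are detected cohomologically by multiplication by $t_i$, which is forced because in $H^*(X) = \Z[t]/(q_1,\ldots,q_n)$ the class $q_i$ is precisely the image of the top cell of the $i$-th sphere in the fibration \eqref{fibration}, so $\beta_i^*$ is "cap with the fiber class", i.e., the relation $q_i$ records that $t_i$ times the fundamental class of the $i$-th $S^{2d-1}$ bounds. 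Working this out in degree $2d+2$: $H^{2d+1}(X) = 0$ so $H^{2d+2}(Y) \hookrightarrow H^{2d+2}(X) = \Z[t]_{d+1}/(q)_{d+1}$, and the cokernel maps injectively into $H^{2d+2}(Y,X) = \bigoplus_i \Z\langle s_i\rangle \otimes \Z[t]_1$; the image of $\beta^*\colon H^{2d+1}(X) \to H^{2d+2}(Y,X)$ is zero, while the image of $\beta^*$ landing in $H^{2d+2}(Y,X)$ from $H^{2d+1}(X)=0$ is again zero, so $H^{2d+2}(Y) = H^{2d+2}(X)$ — but wait, this does not yet see the relations $t_i q_j$. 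The correct bookkeeping is that $H^{2d+2}(Y)$ must be read off from $0 \to \operatorname{coker}(\beta^*_{2d+1}) \to H^{2d+2}(Y,X) \to H^{2d+2}(Y) \to \ker(\beta^*_{2d+2}) \to 0$ where now $\beta^*_{2d+2}\colon H^{2d+2}(X) \to H^{2d+3}(Y,X)$; I would identify $\ker(\beta^*_{2d+2})$ inside $\Z[t]_{d+1}/(q)$ as exactly the classes killed by all $t_i$-multiplications modulo $(q)$, and combine with the rank-$n$ contribution from $H^{2d+2}(Y,X)$ to recover $\Z[t_1,\ldots,t_n]_{d+1} / (t_iq_j)_{d+1}$ — here the regularity of $q_1,\ldots,q_n$ from Lemma \ref{regular sequence} is what guarantees the expected ranks (the Koszul complex computes $\mathrm{Tor}$, so $q_i p \equiv 0 \bmod (q_1,\ldots,q_n)$ forces $p \in (q_1,\ldots,q_n)$ up to the syzygies, which in this low degree range are trivial).

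The main obstacle I anticipate is the careful identification of the $\Z[t]$-module structure on $H^*(Y,X)$ and, relatedly, pinning down that the comparison map really is polynomial multiplication by $t_i$ rather than $t_i$ plus correction terms — this requires knowing that the cofiber inclusion $X \hookrightarrow Y$ kills exactly the images of $t_i$ times the fiber generators and nothing more in the range considered, which in turn rests on the multiplicativity of the relative cup product $H^*(Y,X) \otimes H^*(Y) \to H^*(Y,X)$ and the fact that $s_i$ transgresses (up to sign) to $q_i$ in the cofibration, dual to $q_i$ being the image of the fundamental class of $S^{2d-1}$ under $\phi$ in \eqref{fibration}. Once that module identification is in hand, the exact sequence computation in each of the three degrees $2d+1, 2d+2, 2d+3$ is routine linear algebra over $\Z$, using Lemma \ref{regular sequence} to control ranks; the range $* \le 2d+3$ is exactly where no nontrivial Koszul syzygies among the $q_i$ intervene, which is why the answer has the clean form $\Z[t_1,\ldots,t_n]/(t_iq_j)$.
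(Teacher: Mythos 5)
There are two genuine errors here, and together they show you have the easy and hard parts of this lemma reversed. First, your claim that $H^*(Y)\cong H^*(X)$ in degrees $\le 2d$ is false and in fact contradicts the statement being proved: the right-hand side of the lemma in degree $2d$ is all of $\Z[t_1,\ldots,t_n]_{2d}$, whereas $H^{2d}(X)=\Z[t_1,\ldots,t_n]_{2d}/\langle q_1,\ldots,q_n\rangle$. The vanishing of $H^{2d-1}(X)$ makes $H^{2d}(Y,X)\to H^{2d}(Y)$ injective, not $H^{2d}(Y)\to H^{2d}(X)$, so the long exact sequence gives a short exact sequence $0\to\Z^n\to H^{2d}(Y)\to H^{2d}(X)\to 0$. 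Resolving this extension, and doing so compatibly with the ring structure, is the real content of the lemma; the paper does it by mapping in from $H^*((\C P^\infty)^n)=\Z[t_1,\ldots,t_n]$ via $\bar\pi^*$ and using the transgression computation $\tau(u_i)=q_i$ in the Serre spectral sequence of \eqref{fibration} (which is where Lemma \ref{regular sequence} enters) to conclude that $\delta(u_i)=\pi^*(q_i)$ and hence that $\pi^*$ is an isomorphism onto $H^*(X,(S^{2d-1})^n)\cong\widetilde{H}^*(Y)$ in degrees $\le 2d$. Your proposal never constructs the ring map $\Z[t_1,\ldots,t_n]\to H^*(Y)$ at all, so even if the additive bookkeeping were repaired it would not identify the ring structure claimed in the statement.

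Second, your description of $H^*(Y,X)$ as $\bigoplus_i\Z\langle s_i\rangle\cdot\Z[t_1,\ldots,t_n]$ is wrong in the range where you use it: by \eqref{cell Y}, $Y/X$ is a wedge of $n$ copies of $S^{2d}$ through dimension $4d-2$, so $H^{2d+2}(Y,X)=0$ for $d\ge 3$ and there are no classes $t_is_j$ to receive anything. The entire ``multiplication by $t_i$'' analysis in degrees $2d+1$ through $2d+3$, including the four-term sequence you write down, therefore rests on a false premise. The correct argument in those degrees is the trivial one: $H^*(Y,X)$ vanishes for $2d+1\le *\le 4d-3$, so $H^*(Y)\cong H^*(X)$ there, and the degree-$(2d+2)$ part of the ideal $(q_1,\ldots,q_n)$ is spanned by the elements $t_iq_j$ --- that is the only place the relations $t_iq_j$ enter, and no Koszul syzygy considerations are needed in this range.
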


\begin{proof}
  Let $u_i\in H^{2d-1}((S^{2d-1})^n)$ denote the generator corresponding to the $i$-th $S^{2d-1}$ in $(S^{2d-1})^n$. By Lemma \ref{regular sequence}, the elements $q_1,\ldots,q_n$ of $\Z[t_1,\ldots,t_n]$ are linearly independent, so we may assume
  \[
    \tau(u_i)=q_i
  \]
  for $i=1,\ldots,n$, where $\tau$ denotes the transgression in the Serre spectral sequence for the homotopy fibration \eqref{fibration}, implying
  \[
    \delta(u_i)=\pi^*(q_i)
  \]
  for the connecting map $\delta\colon H^{*-1}((S^{2d-1})^n)\to H^*(X,(S^{2d-1})^n)$ of the long exact sequence for the pair $(X,(S^{2d-1})^n)$ and the map
  \begin{equation}
    \label{pi}
    \pi^*\colon H^*((\C P^\infty)^n)\to H^*(X,(S^{2d-1})^n).
  \end{equation}
  By degree reasons, the kernel of the composite
  \[
    H^{2d}((\C P^\infty)^n)\xrightarrow{\pi^*}H^{2d}(X,(S^{2d-1})^n)\to H^{2d}(X)
  \]
  is generated by $q_1,\ldots,q_n$. Then the map \eqref{pi} for $*=2d$ is an isomorphism. Thus since $\widetilde{H}^*(Y)\cong H^*(X,(S^{2d-1})^n)$, it follows from \eqref{cell Y} that the map \eqref{pi} is an isomorphism for $1\le *\le 2d$. On the other hand, the $(2d+2)$-dimensional part of the ideal $(q_1,\ldots,q_n)$ in $\Z[t_1,\ldots,t_n]$ is generated by $t_iq_j$ for $i,j=1,\ldots,n$. Then by \eqref{cell Y}, the proof is finished.
\end{proof}

Let $\bar{\pi}\colon Y\to(\C P^\infty)^n$ denote an extension of the map $\pi\colon X\to(\C P^\infty)^n$. Then by \cite[Theorem 1.1]{G0}, the homotopy fiber of $\bar{\pi}\colon Y\to(\C P^\infty)^n$ has the homotopy type of the join $(S^1)^n\star(S^{2d-1})^n$. We consider the cofiber $\overline{Y}$ of the fiber inclusion of $\bar{\pi}$. Let $g_i\colon A\to A^n$ denote the $i$-th inclusion for $i=1,\ldots,n$, and let $\gamma_{ij}$ denote the composite
\[
  S^{2d+1}=S^1\star S^{2d-1}\xrightarrow{g_i\star g_j}(S^1)^n\star(S^{2d-1})^n\xrightarrow{\text{incl}}Y.
\]
Then the map
\[
  \bigvee_{i,j=1}^ng_i\star g_j\colon\bigvee_{i,j=1}^nS^{2d+1}\to (S^1)^n\star(S^{2d-1})^n
\]
is an inclusion of the $(2d+1)$-skeleton and has a left homotopy inverse, implying
\begin{equation}
  \label{cell barY}
  \overline{Y}_{2d+2}=Y_{2d+2}\cup_{\gamma_{11}}e^{2d+2}\cup_{\gamma_{12}}\cdots\cup_{\gamma_{nn}}e^{2d+2}.
\end{equation}

\begin{lemma}
  \label{barY}
  For $*\le 2d+2$,
  \[
    H^*(\overline{Y})=\Z[t_1,\ldots,t_n].
  \]
\end{lemma}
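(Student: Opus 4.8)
The plan is to compute $H^*(\overline{Y})$ in the range $*\le 2d+2$ directly from the cell structure \eqref{cell barY}, using the description of $H^*(Y)$ from Lemma \ref{Y} together with the fact that the attaching maps $\gamma_{ij}$ are detected by the Whitehead-product-type structure coming from the join. The key point is that passing from $Y$ to $\overline{Y}$ attaches one $(2d+2)$-cell along each $\gamma_{ij}$, and by the construction of $\overline{Y}$ as the cofiber of the fiber inclusion of $\bar\pi\colon Y\to(\C P^\infty)^n$, the cohomology of $\overline{Y}$ agrees with that of $(\C P^\infty)^n$ through the relevant range — or more precisely, $\widetilde H^*(\overline{Y})\cong H^*((\C P^\infty)^n, \text{fiber})$, and the fiber $(S^1)^n\star(S^{2d-1})^n$ is $(2d)$-connected below its first nontrivial reduced cohomology in degree $2d+1$... wait, $(S^1)^n\star(S^{2d-1})^n$ has the connectivity of $S^1\star S^{2d-1}=S^{2d+1}$, so it is $2d$-connected. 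Hence the map $\bar\pi^*\colon H^*((\C P^\infty)^n)\to H^*(\overline{Y})$ is an isomorphism for $*\le 2d$ and injective (with cokernel coming from the fiber's cohomology in degree $2d+1$, which maps to $H^{2d+2}(\overline Y)$) — I would run the long exact sequence of the pair to pin down degree $2d+1$ and $2d+2$.

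\medskip

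Concretely, I would proceed as follows. First, recall from Lemma \ref{Y} that for $*\le 2d+1$ we have $H^*(Y)=\Z[t_1,\ldots,t_n]$, since the relations $t_iq_j$ live in degree $2d+2$; in particular $H^{2d+1}(Y)=0$ (odd degree, polynomial ring on degree-$2$ generators) and $H^{2d+2}(Y)=\bigl(\Z[t_1,\ldots,t_n]/(t_iq_j)\bigr)_{2d+2}$, which is the degree-$(2d+2)$ part of $\Z[t_1,\ldots,t_n]$ modulo the span of the $n^2$ elements $t_iq_j$. Next, from \eqref{cell barY}, $\overline Y_{2d+2}$ is obtained from $Y_{2d+2}$ by attaching $n^2$ cells of dimension $2d+2$ along the maps $\gamma_{ij}$. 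The effect on cohomology in degree $\le 2d+1$ is nil, so $H^*(\overline Y)=\Z[t_1,\ldots,t_n]$ for $*\le 2d+1$, matching the claim. For degree $2d+2$, the cofiber sequence gives an exact sequence $\widetilde H^{2d+1}(\bigvee S^{2d+1})\to H^{2d+2}(\overline Y)\to H^{2d+2}(Y)\to \widetilde H^{2d+2}(\bigvee S^{2d+1})=0$, i.e. $\bigoplus_{i,j}\Z\to H^{2d+2}(\overline Y)\to H^{2d+2}(Y)\to 0$. So I must show the connecting map $\bigoplus_{i,j}\Z\to H^{2d+2}(\overline Y)$ — equivalently, understand $H^{2d+2}(\overline Y)$ — produces exactly $\Z[t_1,\ldots,t_n]$ in that degree. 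The cleanest route is to compare with $\bar\pi$: since $\bar\pi$ extends $\pi$ and $\overline Y$ is the cofiber of the fiber inclusion, $\bar\pi^*$ identifies $H^*(\overline Y)$ with $H^*$ of the base $(\C P^\infty)^n$ in the range where the fiber $(S^1)^n\star(S^{2d-1})^n$ contributes nothing, namely $*\le 2d$, and in degree $2d+1, 2d+2$ one reads off the contribution of $\widetilde H^*\bigl((S^1)^n\star(S^{2d-1})^n\bigr)$, which in degree $2d+1$ is free of rank $n^2$ (the classes $\gamma_{ij}$) and vanishes in degree $2d+2$. Running the Serre/cofiber long exact sequence then forces $H^{2d+2}(\overline Y)\cong H^{2d+2}((\C P^\infty)^n)=\bigl(\Z[t_1,\ldots,t_n]\bigr)_{2d+2}$, and the ring structure is inherited via $\bar\pi^*$.

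\medskip

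The main obstacle I anticipate is bookkeeping the precise interaction between the cofiber $\overline Y$ of the fiber inclusion and the map $\bar\pi$: one must be sure that the $n^2$ attached $(2d+2)$-cells exactly kill the $n^2$-dimensional ``deficit'' $H^{2d+2}(Y)$ versus $H^{2d+2}((\C P^\infty)^n)$, i.e. that the connecting homomorphism is an isomorphism onto the span of the $t_iq_j$. This amounts to checking that the composites $S^{2d+1}\xrightarrow{\gamma_{ij}}Y$ are detected on $H^{2d+2}$ via the relations $t_iq_j$ — which follows because $\gamma_{ij}=(g_i\star g_j)$ composed with the fiber inclusion, and by \cite{G0} the fiber inclusion carries the join cell corresponding to $S^1_{(i)}\star S^{2d-1}_{(j)}$ to the cell detecting $t_i q_j$ (the "product" of the $i$-th base relation $t_i$ and the $j$-th fiber class transgressing to $q_j$). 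I would make this rigorous by identifying, in the cofiber sequence for $Y$, the coboundary of the $j$-th cell $e^{2d}$ in \eqref{cell Y} with $\pi^*(q_j)$ (already done in the proof of Lemma \ref{Y}) and then tracking $t_i\cdot(-)$ through to \eqref{cell barY}. Everything else is a degree count.
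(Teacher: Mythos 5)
Your overall strategy coincides with the paper's: read off $H^*(\overline{Y})$ from the cell structure \eqref{cell barY} and then identify the degree $2d+2$ part by comparison with $(\C P^\infty)^n$ as in the proof of Lemma \ref{Y}. The degree $\le 2d+1$ part and the exact sequence $0\to\Z^{n^2}\to H^{2d+2}\to H^{2d+2}(Y)\to 0$ are correct. However, there is a concrete error that breaks the step you call the ``cleanest route'': $\widetilde{H}^{2d+2}\bigl((S^1)^n\star(S^{2d-1})^n\bigr)$ does \emph{not} vanish when $n\ge 2$. The join is $\Sigma\bigl((S^1)^n\wedge(S^{2d-1})^n\bigr)$, and besides the $n^2$ bottom spheres $S^{2d+1}$ it contains $n\binom{n}{2}$ wedge summands $S^{2d+2}$ coming from $\Sigma(S^1\wedge S^1\wedge S^{2d-1})$. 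Consequently the long exact sequence of the pair $(Y,F)$ does not ``force'' $H^{2d+2}(\overline{Y})\cong H^{2d+2}((\C P^\infty)^n)$ as you assert: it continues into $H^{2d+2}(F)\ne 0$, and you must separately check that $H^{2d+2}(Y)\to H^{2d+2}(F)$ is zero (true, since $H^{2d+2}(Y)$ is generated by products of degree-two classes and $H^2(F)=0$), or invoke the Serre exact sequence isomorphism $H_*(Y,F)\cong\widetilde{H}_*((\C P^\infty)^n)$ for $*\le 2d+2$ outright.

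The same oversight infects your ``concrete'' computation: the cofiber sequence you write down computes $H^{2d+2}$ of $Y\cup_{\gamma_{11}}e^{2d+2}\cup\cdots\cup_{\gamma_{nn}}e^{2d+2}$, not of $\overline{Y}$, because the $(2d+2)$-spheres of the fiber that you overlooked contribute $(2d+3)$-cells to $\overline{Y}$, and attaching those could a priori kill classes in $H^{2d+2}$. The paper spends the first half of its proof on precisely this point, showing that all $(2d+3)$-cells are attached to $Y_{2d+2}$ and kill nothing. Finally, the extension problem that you correctly single out as ``the main obstacle'' — identifying the $n^2$ new generators with the classes $t_iq_j$ via $\bar{\pi}^*$, using their linear independence from Lemma \ref{regular sequence} and the transgression computation of Lemma \ref{Y} — is exactly the remaining content of the paper's proof, but in your write-up it stays a plan rather than an argument. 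Correct the vanishing claim, dispose of the $(2d+3)$-cells, and carry out the comparison of kernels, and the proof goes through.
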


\begin{proof}
  Since $(S^1)^n\star(S^{2d-1})^n$ is homotopy equivalent to a wedge of spheres, all $(2d+3)$-cells of $\overline{Y}$ are attached to $Y_{2d+2}$. Then by Lemma \ref{Y} and \eqref{cell barY}, the $(2d+3)$-cells of $\overline{Y}$ do not kill any cohomology class of $\overline{Y}_{2d+2}$, implying $H^*(\overline{Y})=H^*(\overline{Y}_{2d+2})$ for $*\le 2d+2$. Now by Lemma \ref{regular sequence}, $t_iq_j$ for $i,j=1,\ldots,n$ are linearly independent in $\Z[t_1,\ldots,t_n]$. Then by arguing as in the proof of Lemma \ref{Y}, the statement is proved.
\end{proof}

\begin{lemma}
  \label{homotopy Y}
  The homotopy group $\pi_{2d+1}(Y)$ is a free abelian group generated by $\gamma_{ij}$ for $i,j=1,\ldots,n$.
\end{lemma}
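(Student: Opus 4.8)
The plan is to compute $\pi_{2d+1}(Y)$ via the homotopy fiber of $\bar\pi$, not via $Y$ itself. Write $F$ for that fiber, which as noted above is homotopy equivalent to $(S^1)^n\star(S^{2d-1})^n$. Since $(\C P^\infty)^n$ is an Eilenberg--MacLane space $K(\Z^n,2)$ and $2d+1,2d+2\ge 3$, both $\pi_{2d+1}((\C P^\infty)^n)$ and $\pi_{2d+2}((\C P^\infty)^n)$ vanish, so the long exact homotopy sequence of the fibration $F\to Y\xrightarrow{\bar\pi}(\C P^\infty)^n$ yields an isomorphism $\pi_{2d+1}(F)\xrightarrow{\cong}\pi_{2d+1}(Y)$ induced by the fiber inclusion $F\hookrightarrow Y$. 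This inclusion carries the class of $g_i\star g_j\colon S^{2d+1}=S^1\star S^{2d-1}\to(S^1)^n\star(S^{2d-1})^n=F$ to $\gamma_{ij}$ by definition, so it suffices to prove that $\pi_{2d+1}(F)$ is free abelian with basis $\{g_i\star g_j\}_{i,j=1}^n$.

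To do this I would invoke the standard homotopy equivalence $A\star B\simeq\Sigma(A\wedge B)$, which is natural in $A$ and $B$, to write $F\simeq\Sigma Z$ with $Z=(S^1)^n\wedge(S^{2d-1})^n$. As $(S^1)^n$ is $0$-connected and $(S^{2d-1})^n$ is $(2d-2)$-connected, $Z$ is $(2d-1)$-connected, and since the reduced homologies of $(S^1)^n$ and $(S^{2d-1})^n$ are free abelian, the Künneth theorem gives
\[
  \widetilde H_{2d}(Z)\cong\widetilde H_1((S^1)^n)\otimes\widetilde H_{2d-1}((S^{2d-1})^n)\cong\Z^{n^2},
\]
free abelian with basis $x_i\otimes y_j$, where $x_i$ is the homology class of the $i$-th $S^1$ and $y_j$ that of the $j$-th $S^{2d-1}$. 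Hence $F=\Sigma Z$ is $2d$-connected with $H_{2d+1}(F)\cong\widetilde H_{2d}(Z)\cong\Z^{n^2}$, so the Hurewicz theorem gives an isomorphism $\pi_{2d+1}(F)\xrightarrow{\cong}H_{2d+1}(F)$.

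It then remains to match the $g_i\star g_j$ with this basis. Under $F\simeq\Sigma Z$ the map $g_i\star g_j$ corresponds, by naturality of the join--suspension equivalence, to the suspension of the smash $S^1\wedge S^{2d-1}\to(S^1)^n\wedge(S^{2d-1})^n$ of the two inclusions; tracking the fundamental class through the suspension and Künneth isomorphisms shows that its Hurewicz image is $x_i\otimes y_j$. Therefore $\{g_i\star g_j\}_{i,j=1}^n$ is a basis of $\pi_{2d+1}(F)$, and by the first paragraph $\{\gamma_{ij}\}_{i,j=1}^n$ is a basis of $\pi_{2d+1}(Y)$. The only step that demands real care is this last matching of generators: it is what turns ``$\pi_{2d+1}(Y)$ is generated by the $\gamma_{ij}$'' into ``$\pi_{2d+1}(Y)$ is \emph{freely} generated by them'', and it is handled by passing to homology and using naturality of Künneth rather than by any further geometric input. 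Everything else is bookkeeping of connectivities, where the hypothesis $d\ge3$ is precisely what places $2d+1$ above the range in which $(\C P^\infty)^n$ carries homotopy.
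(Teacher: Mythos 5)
Your argument is correct and follows essentially the same route as the paper: reduce to $\pi_{2d+1}$ of the fiber $(S^1)^n\star(S^{2d-1})^n$ via the long exact sequence of the fibration over $K(\Z^n,2)$, and identify that group as free abelian on the classes $g_i\star g_j$. The paper cites the skeleton description of the join given just before the lemma, whereas you verify it directly via the join--suspension equivalence, K\"unneth and Hurewicz; this is only a difference in the level of detail, not in the method.
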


\begin{proof}
  The statement follows from the homotopy exact sequence of the homotopy fibration $(S^1)^n\star(S^{2d-1})^n\to Y\to(\C P^\infty)^n$, where the $(2d+1)$-skeleton of $(S^1)^n\star(S^{2d-1})^n$ is described as above.
\end{proof}

For $i=1,\ldots,n$, let $\alpha_i\colon S^2\to X$ be a map whose Hurewicz image is the dual of $t_i$, and let $\bar{\beta}_i\colon(D^{2d},S^{2d-1})\to(Y,X)$ denote the obvious extension of $\beta_i\colon S^{2d-1}\to X$. Then
\[
  \delta(\bar{\beta}_i)=\beta_i
\]
for the connecting homomorphism $\delta\colon\pi_*(Y,X)\to\pi_{*-1}(X)$. We consider the relative Whitehead product $[\alpha_i,\bar{\beta}_j]\in\pi_{2d+1}(Y,X)$. See \cite{BM} for the definition. By \cite[(3.5)]{BM},
\[
  \delta([\alpha_i,\bar{\beta}_j])=-[\alpha_i,\beta_j].
\]
Let $\bar{\eta}\colon(D^{2d+1},S^{2d})\to(D^{2d},S^{2d-1})$ be the obvious extension of the Hopf map $\eta\colon S^{2d}\to S^{2d-1}$. By \cite[Theorem (1.4)]{J} (cf. \cite[(5.8)]{T}), we can compute $\pi_{2d+1}(Y,X)$ as follows.

For a commutative ring $R$, let $R\{a_1,\ldots,a_k\}$ denote the free $R$-module with a basis $\{a_1,\ldots,a_k\}$.

\begin{lemma}
  \label{homotopy (Y,X)}
  $\pi_{2d+1}(Y,X)=\Z\{[\alpha_i,\bar{\beta}_j]\mid i,j=1,\ldots,n\}\oplus\Z_2\{\bar{\beta}_i\circ\bar{\eta}\mid i=1,\ldots,n\}$.
\end{lemma}

Let $\beta=\beta_1\vee\cdots\vee\beta_n\colon(S^{2d-1})^{\vee n}\to X$ and $\bar{\beta}=\bar{\beta}_1\vee\cdots\vee\bar{\beta}_n\colon((D^{2d})^{\vee n},(S^{2d-1})^{\vee n})\to(Y,X)$. Let $\iota\colon(A,*)\to(A,B)$ and $\rho\colon A\to A/B$ denote the inclusion and the pinch map, respectively. There is a commutative diagram
\begin{equation}
  \label{pinch}
  \xymatrix{
  &\pi_{2d+1}((D^{2d})^{\vee n},(S^{2d-1})^{\vee n})\ar[r]^(.6)\delta_(.6)\cong\ar[d]^{\bar{\beta}_*}&\pi_{2d}((S^{2d-1})^{\vee n})\ar[d]^{\beta_*}\\
  \pi_{2d+1}(Y)\ar[r]^{\iota_*}\ar[d]^{\rho_*}&\pi_{2d+1}(Y,X)\ar[r]^(.55)\delta\ar[d]^{\rho_*}&\pi_{2d}(X)\\
  \pi_{2d+1}(Y/X)\ar@{=}[r]&\pi_{2d+1}(Y/X)
  }
\end{equation}
in which the middle row is exact. By the homotopy exact sequence for the homotopy fibration \eqref{fibration}, we can see the map $\beta_*$ is an isomorphism. Then there is $\epsilon_{ij}\in\pi_{2d+1}((D^{2d})^{\vee n},(S^{2d-1})^{\vee n})$ such that
\begin{equation}
  \label{epsilon def}
  \beta_*\circ\delta(\epsilon_{ij})=-[\alpha_i,\beta_j],
\end{equation}
implying
\[
  \delta([\alpha_i,\bar{\beta}_j]-\bar{\beta}_*(\epsilon_{ij}))=-[\alpha_i,\beta_j]-\beta_*\circ\delta(\epsilon_{ij})=0.
\]
Hence by Lemma \ref{homotopy Y},
\begin{equation}
  \label{epsilon,zeta}
  [\alpha_i,\bar{\beta}_j]-\bar{\beta}_*(\epsilon_{ij})=\iota_*(\zeta_{ij})
\end{equation}
such that $\zeta_{ij}$ is a linear combination of $\gamma_{kl}\in\pi_{2d+1}(Y)$ for $k,l=1,\ldots,n$.

\begin{lemma}
  \label{epsilon}
  The Whitehead product $[\alpha_i,\beta_j]$ vanishes if and only if $\rho_*(\zeta_{ij})=0$, where $\rho\colon Y\to Y/X$ denotes the pinch map.
\end{lemma}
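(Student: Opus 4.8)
The plan is to read the three classes $[\alpha_i,\beta_j]$, $\epsilon_{ij}$ and $\zeta_{ij}$ off the commutative diagram \eqref{pinch}. First I would note that $[\alpha_i,\beta_j]=0$ is equivalent to $\epsilon_{ij}=0$: by the homotopy exact sequence of \eqref{fibration} the right-hand vertical map $\beta_*$ in \eqref{pinch} is an isomorphism, and the top horizontal $\delta$ is an isomorphism because $(D^{2d})^{\vee n}$ is contractible, so $\beta_*\circ\delta$ is injective and the claim follows from the defining relation $\beta_*\circ\delta(\epsilon_{ij})=-[\alpha_i,\beta_j]$ of \eqref{epsilon def}. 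Hence it suffices to prove that $\epsilon_{ij}=0$ if and only if $\rho_*(\zeta_{ij})=0$.

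Next I would apply the pinch map $\rho_*\colon\pi_{2d+1}(Y,X)\to\pi_{2d+1}(Y/X)$ to the identity \eqref{epsilon,zeta}. Since the left-hand square of \eqref{pinch} commutes, $\rho_*\circ\iota_*=\rho_*$, so this gives
\[
  \rho_*([\alpha_i,\bar\beta_j])=\rho_*\circ\bar\beta_*(\epsilon_{ij})+\rho_*(\zeta_{ij}).
\]
The key observation is that the left-hand side vanishes: the relative Whitehead product is natural for maps of pairs \cite{BM}, and the pinch $\rho\colon(Y,X)\to(Y/X,*)$ collapses $X$ and hence kills $\alpha_i\in\pi_2(X)$, so $\rho_*[\alpha_i,\bar\beta_j]=[\rho_*\alpha_i,\rho_*\bar\beta_j]=0$. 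Therefore $\rho_*(\zeta_{ij})=-\rho_*\circ\bar\beta_*(\epsilon_{ij})$, and the lemma now reduces to the injectivity of $\rho_*\circ\bar\beta_*$.

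Finally I would make all of the groups and the map explicit. As $(D^{2d})^{\vee n}$ is contractible, $\pi_{2d+1}((D^{2d})^{\vee n},(S^{2d-1})^{\vee n})\cong\pi_{2d}((S^{2d-1})^{\vee n})\cong(\Z_2)^n$ (Hilton's theorem, using $d\ge 3$), with basis the classes $\bar\iota_k\circ\bar\eta$, where $\bar\iota_k$ denotes the $k$-th wedge inclusion. On the other hand $Y/X\simeq\Sigma((S^{2d-1})^{\times n})$, which through dimension $2d+1$ is $\bigvee_n S^{2d}$, and $\pi_{2d+1}(\bigvee_n S^{2d})\cong(\Z_2)^n$ by Hilton again. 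The map $\bar\beta$ induces on the quotients the suspension of the canonical map $(S^{2d-1})^{\vee n}\to(S^{2d-1})^{\times n}$, which under the standard splitting of the suspension of a product is the inclusion of the $n$ bottom wedge summands $S^{2d}$, while $\bar\eta$ induces on the quotients the suspension $\Sigma\eta\colon S^{2d+1}\to S^{2d}$, a generator of $\pi_{2d+1}(S^{2d})\cong\Z_2$. Chasing these identifications, $\rho_*\circ\bar\beta_*$ carries $\bar\iota_k\circ\bar\eta$ to the $k$-th generator of $\pi_{2d+1}(\bigvee_n S^{2d})\cong(\Z_2)^n$, so it is an isomorphism, and together with the two previous paragraphs this proves the lemma. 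I expect this last step to be the main obstacle: it requires pinning down the homotopy type of $Y/X$ through dimension $2d+1$, the standard splitting of $\Sigma((S^{2d-1})^{\times n})$, and the effect of $\bar\beta$ and $\bar\eta$ on the quotients precisely enough to be sure that $\rho_*\circ\bar\beta_*$ is an isomorphism and not, say, multiplication by $2$ (which would be zero on $(\Z_2)^n$).
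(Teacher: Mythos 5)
Your proposal is correct and follows essentially the same route as the paper's proof: reduce $[\alpha_i,\beta_j]=0$ to $\epsilon_{ij}=0$ via the isomorphisms $\beta_*$ and $\delta$ in \eqref{pinch}, apply $\rho_*$ to \eqref{epsilon,zeta} using $\rho_*([\alpha_i,\bar\beta_j])=0$, and conclude from the injectivity of $\rho_*\circ\bar\beta_*$. The only difference is that where the paper justifies that last injectivity in one line from $(Y/X)_{4d-2}=S^{2d}\vee\cdots\vee S^{2d}$, you verify it by an explicit (and correct) Hilton-type computation identifying both groups with $(\Z_2)^n$.
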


\begin{proof}
  Since $\beta_*$ in \eqref{pinch} is an isomorphism, it follows from \eqref{epsilon def} that $[\alpha_i,\beta_j]=0$ if and only if $\epsilon_{ij}=0$. By \eqref{cell Y},
  \[
    (Y/X)_{4d-2}=\underbrace{S^{2d}\vee\cdots\vee S^{2d}}_n,
  \]
  so $\rho_*\circ\bar{\beta}_*$ in \eqref{pinch} is an isomorphism. Then $\epsilon_{ij}=0$ if and only if $\rho_*\circ\bar{\beta}_*(\epsilon_{ij})=0$. On the other hand,  $\rho_*([\alpha_i,\bar{\beta}_j])=0$ as $\rho_*(\alpha_i)=0$, so by \eqref{epsilon,zeta}, we get
  \[
    \rho_*\circ\bar{\beta}_*(\epsilon_{ij})+\rho_*(\zeta_{ij})=\rho_*([\alpha_i,\bar{\beta}_j])=0.
  \]
  Thus $\rho_*\circ\bar{\beta}_*(\epsilon_{ij})=0$ if and only if $\rho_*(\zeta_{ij})=0$, completing the proof.
\end{proof}

Now we are ready to prove:

\begin{proposition}
  \label{criterion}
  The Whitehead products $[\alpha_i,\beta_j]$ are trivial for $i,j=1,\ldots,n$ if and only if $\Sq^2q_k=0$ in $\Z_2[t_1,\ldots,t_n]$ for all $k$.
\end{proposition}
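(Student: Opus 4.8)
The plan is to combine Lemma~\ref{epsilon} with a $\Sq^2$ computation carried out in $\overline{Y}$ and $\overline{Y}/X$. By Lemma~\ref{epsilon}, the Whitehead products $[\alpha_i,\beta_j]$ are all trivial if and only if $\rho_*(\zeta_{ij})=0$ in $\pi_{2d+1}(Y/X)$ for all $i,j$, where $\rho\colon Y\to Y/X$ is the pinch map. By \eqref{cell Y} the $(4d-2)$-skeleton of $Y/X$ is a wedge of $n$ copies of $S^{2d}$ and the remaining cells have dimension $\ge 4d-1>2d+1$, so $\pi_{2d+1}(Y/X)\cong\bigoplus_{m=1}^n\Z_2$, the $m$-th summand being generated by $\iota_m\circ\eta$, where $\iota_m$ is the inclusion of the $m$-th sphere and $\eta$ is the Hopf map. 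Thus it is enough to express $\rho_*(\zeta_{ij})$ in terms of the $\Sq^2 q_m$, and the heart of the matter is to do this first for $\rho_*(\gamma_{kl})$.

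To compute $\rho_*(\gamma_{kl})$, I would work in $\overline{Y}/X$. Collapsing $X$ in \eqref{cell barY} and using that $Y/X$ has no cells in dimensions $2d+1$ or $2d+2$, the space $\overline{Y}/X$ through dimension $2d+2$ is $\bigvee_{m=1}^nS^{2d}$ with $n^2$ cells of dimension $2d+2$ attached by the maps $\rho\circ\gamma_{kl}$. Let $x_m$ and $y_{kl}$ be the mod $2$ cohomology classes dual to the $m$-th bottom sphere and to the cell attached by $\rho\gamma_{kl}$. Since $\Sq^2$ detects $\eta$ in the two-cell complex $S^{2d}\cup_\eta e^{2d+2}$, we get $\Sq^2 x_m=\sum_{k,l}a^{kl}_m\,y_{kl}$, where $a^{kl}_m\in\Z_2$ is the $m$-th component of $\rho_*(\gamma_{kl})$. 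On the other hand, since $H^*(X)$ is concentrated in even degrees we have $H^{2d-1}(X)=H^{2d+1}(X)=0$, so in the long exact sequence of the pair $(\overline{Y},X)$ the map $j^*\colon H^*(\overline{Y}/X;\Z_2)=H^*(\overline{Y},X;\Z_2)\to H^*(\overline{Y};\Z_2)$ is injective in degrees $2d$ and $2d+2$, with image the span of $q_1,\dots,q_n$ and of $\{t_kq_l\mid k,l=1,\dots,n\}$ respectively, the latter being $n^2$-dimensional by Lemmas~\ref{barY} and \ref{regular sequence}; in particular $j^*x_m=q_m$ and $j^*y_{kl}=t_kq_l$ after suitable indexing. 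Applying $j^*$ to the relation above and using the naturality of $\Sq^2$ gives $\Sq^2 q_m=\sum_{k,l}a^{kl}_m\,t_kq_l$ in $\Z_2[t_1,\dots,t_n]$, so by linear independence of the $t_kq_l$, $\Sq^2 q_m=0$ for all $m$ if and only if $\rho_*(\gamma_{kl})=0$ for all $k,l$.

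Finally I would pass from the $\gamma_{kl}$ to the $\zeta_{ij}$. Writing $\zeta_{ij}=\sum_{k,l}c^{ij}_{kl}\gamma_{kl}$ as in \eqref{epsilon,zeta} and applying $\iota_*$, relations \eqref{epsilon def}--\eqref{epsilon,zeta} give $\sum_{k,l}c^{ij}_{kl}\iota_*(\gamma_{kl})=[\alpha_i,\bar{\beta}_j]-\bar{\beta}_*(\epsilon_{ij})$ in $\pi_{2d+1}(Y,X)$. Now $\epsilon_{ij}$ lies in the finite group $\pi_{2d+1}((D^{2d})^{\vee n},(S^{2d-1})^{\vee n})\cong\pi_{2d}((S^{2d-1})^{\vee n})$, so $\bar{\beta}_*(\epsilon_{ij})$ is torsion; since by Lemma~\ref{homotopy (Y,X)} the torsion subgroup of $\pi_{2d+1}(Y,X)$ is $\Z_2\{\bar{\beta}_r\circ\bar{\eta}\}$ and the free quotient $\pi_{2d+1}(Y,X)/\mathrm{tors}\cong\Z^{n^2}$ has $\{[\alpha_p,\bar{\beta}_q]\}$ as a $\Z$-basis, reducing modulo torsion yields $CG=E_{n^2}$, where $C=(c^{ij}_{kl})$ and $G$ is the integer matrix expressing the classes $\iota_*(\gamma_{kl})$ in that basis. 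Hence $C\in\mathrm{GL}_{n^2}(\Z)$, so $C$ is invertible modulo $2$, and since $\rho_*(\zeta_{ij})=\sum_{k,l}c^{ij}_{kl}\rho_*(\gamma_{kl})$ in the $\Z_2$-module $\pi_{2d+1}(Y/X)$, we obtain that $\rho_*(\zeta_{ij})=0$ for all $i,j$ if and only if $\rho_*(\gamma_{kl})=0$ for all $k,l$. Chaining this with Lemma~\ref{epsilon} and the second paragraph finishes the proof.

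The step I expect to be the main obstacle is the cohomological bookkeeping in the second paragraph: one must verify that $x_m$ and $y_{kl}$ correspond under $j^*$ to $q_m$ and $t_kq_l$ compatibly with the cell structure of $\overline{Y}$, so that the coefficients of $\Sq^2 x_m$ are genuinely the $\eta$-components of $\rho_*(\gamma_{kl})$. It in fact suffices to know that $j^*$ carries the bottom-sphere duals and the cell duals onto bases of the spans of the $q_m$ and of the $t_kq_l$, but even this requires carefully tracking the generators $t_i$ and the transgression images $q_i$ through the cofibrations defining $Y$ and $\overline{Y}$ and through Ganea's description of the fiber of $\bar{\pi}$.
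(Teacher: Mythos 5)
Your argument is correct and follows essentially the same route as the paper: reduce to $\rho_*(\zeta_{ij})=0$ via Lemma~\ref{epsilon}, show this is equivalent to $\rho_*(\gamma_{kl})=0$ using Lemma~\ref{homotopy (Y,X)} and the fact that $\bar{\beta}_*(\epsilon_{ij})$ is torsion, and then read off $\rho_*(\gamma_{kl})$ from $\Sq^2$ on $\overline{Y}/X$ mapped injectively into $H^*(\overline{Y};\Z_2)=\Z_2[t_1,\ldots,t_n]$. The only cosmetic difference is that you make the change-of-basis step explicit as an invertible integer matrix $C$ with $CG=E_{n^2}$, where the paper argues that the subgroup generated by the $\iota_*(\zeta_{ij})$ is a maximal free abelian subgroup coinciding with $\mathrm{Im}\,\iota_*$; these are the same argument.
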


\begin{proof}
  By the homotopy fibration \eqref{fibration}, we can see that $\pi_{2d+1}(X)$ is a finite group, so the map $\iota_*$ in \eqref{pinch} is injective by Lemma \ref{homotopy Y}. In particular, $\mathrm{Im}\,\iota_*$ is a free abelian group. On the other hand, by Lemma \ref{homotopy (Y,X)} and \eqref{epsilon,zeta}, the subgroup $A$ of $\pi_{2d+1}(Y,X)$ generated by $\iota_*(\zeta_{ij})$ for $i,j=1,\ldots,n$ is a maximal free abelian subgroup of $\pi_{2d+1}(Y,X)$. Then since $A\subset\mathrm{Im}\,\iota_*$, we obtain $A=\mathrm{Im}\,\iota_*$, implying that $\rho_*(\zeta_{ij})=0$ for $i,j=1,\ldots,n$ if and only if $\rho_*(\gamma_{ij})=0$ for $i,j=1,\ldots,n$.

  By Lemma \ref{Y}, the $(2d)$-cells in \eqref{cell Y} correspond to $q_1,\ldots,q_n$, and by \eqref{cell Y} and \eqref{cell barY},
  \[
    (\overline{Y}/X)_{2d+2}=(\underbrace{S^{2d}\vee\cdots\vee S^{2d}}_n)\cup_{\rho_*(\gamma_{11})}e^{2d+2}\cup_{\rho_*(\gamma_{12})}\cdots\cup_{\rho_*(\gamma_{nn})}e^{2d+2}
  \]
  such that the $(2d+2)$-cells may be considered to be corresponding to $t_iq_j$ for $i,j=1,\ldots,n$. Then as the generator of $\pi_{2d+1}(S^{2d})\cong\Z_2$ is detected by $\Sq^2$, we get that $\rho_*(\gamma_{ij})=0$ if and only if $\Sq^2q_k$ does not include the terms $t_iq_j$ in $H^*(\overline{Y}/X;\Z_2)$ for $k=1,\ldots,n$. Note that in $H^*(\overline{Y};\Z_2)$, $\Sq^2q_k$ must belong to the ideal $(q_1,\ldots,q_n)$ and every degree $2d+2$ element of $(q_1,\ldots,q_n)$ is a linear combination of $t_iq_j$ for $i,j=1,\ldots,n$. Then since the natural map $H^*(\overline{Y}/X;\Z_2)\to H^*(\overline{Y};\Z_2)$ is injective for $2d\le *\le 2d+2$, the above condition on $\Sq^2q_k$ in $H^*(\overline{Y}/X;\Z_2)$ is equivalent to that $\Sq^2q_k=0$ in $H^*(\overline{Y};\Z_2)=\Z_2[t_1,\ldots,t_n]$ ($*\le 2d+2$) for $k=1,\ldots,n$, completing the proof.
\end{proof}


\section{Quasitoric manifolds over $(\Delta^3)^n$}\label{Quasitoric manifold}

In this section, we prove Theorems \ref{main} and \ref{family}. We fix an ordering of the facets of $(\Delta^3)^n$ as in \cite{CMS} to consider characteristic matrices over $(\Delta^3)^n$. Let $F_1,F_2,F_3,F_4$ be the facets of $\Delta^3$, where any choice of ordering will do by symmetry. Then facets of $(\Delta^3)^n$ are
\[
  F_{ij}=(\Delta^3)^{i-1}\times F_j\times(\Delta^3)^{n-i}
\]
for $i=1,\ldots,n$ and $j=1,2,3,4$. We fix an ordering of facets as
\[
  F_{11},\,F_{12},\,F_{13},\,F_{14},\,F_{21},\,F_{22},\,F_{23},\,F_{24},\,\ldots,\,F_{n1},\,F_{n2},\,F_{n3},\,F_{n4},
\]
where this ordering is used in Theorems \ref{main} and \ref{family}.

\begin{lemma}
  \label{standard form}
  Every characteristic matrix over $(\Delta^3)^n$ is equivalent to a matrix
  \[
    \begin{pmatrix}
      E_3&a_{11}&&a_{12}&&a_{13}&&&a_{1n}\\
      &a_{21}&E_3&a_{22}&&a_{23}&&&a_{2n}\\
      &a_{31}&&a_{32}&E_3&a_{33}&&&a_{3n}\\
      &\vdots&&\vdots&&\vdots&\ddots&&\vdots\\
      &a_{n1}&&a_{n2}&&a_{n3}&&E_3&a_{nn}
    \end{pmatrix}
  \]
  for $a_{ij}\in\Z^3$ such that $a_{ii}={}^t(1,1,1)$ for $i=1,\ldots,n$.
\end{lemma}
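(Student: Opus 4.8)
The plan is to normalize the characteristic matrix one $\Delta^3$-block at a time, using the three allowed operations: left multiplication by $Q \in \mathrm{GL}_n(\Z)$, right multiplication by a diagonal sign matrix $D$, and column permutations coming from automorphisms of $(\Delta^3)^n$. First I would record the combinatorics: the vertices of $(\Delta^3)^n$ correspond to choices of one facet from each of the $n$ copies of $\Delta^3$, so for any $i$ the three facets $F_{i1}, F_{i2}, F_{i3}$ (together with facets from other blocks) meet at a common vertex; the nonsingularity condition therefore forces the $3\times 3$ minor of the characteristic matrix on columns $F_{i1},F_{i2},F_{i3}$ to be $\pm1$. In particular, taking $i=1$, the submatrix on columns $F_{11},F_{12},F_{13}$ lies in $\mathrm{GL}_3(\Z)$; multiplying on the left by its inverse (an element of $\mathrm{GL}_n(\Z)$, acting as the identity on the remaining rows is automatic since there are exactly $3n$ rows and this is a genuine element of $\mathrm{GL}_{3n}(\Z)$) turns that block into $E_3$ with zeros below it.

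Next I would iterate: having arranged the first $i-1$ diagonal blocks to be $E_3$ in staircase position with zeros below them, the columns $F_{i1},F_{i2},F_{i3}$ restricted to the last $3(n-i+1)$ rows still have a $\pm1$ minor in their top $3\times 3$ part (because the facets $F_{11},\dots,F_{1,3},\dots$ that we already used do not simultaneously meet $F_{i1},F_{i2},F_{i3}$ at a vertex unless we pick one from each block — one must check that the relevant vertex uses $F_{i1},F_{i2},F_{i3}$ together with the already-normalized facets, so the determinant condition applies to exactly the right submatrix). Left multiplication by a block matrix that is the identity on the first $3(i-1)$ rows and inverts this minor on the remaining rows — which preserves the already-normalized staircase because those rows are zero in the relevant columns — puts $E_3$ in the $i$-th diagonal block and zeros below it. After $n$ steps the matrix has the displayed staircase shape with blocks $a_{ij}\in\Z^3$ in the $F_{j4}$-columns (and zero blocks strictly below the diagonal), where I relabel so that $a_{ij}$ is the entry of block-row $i$ in the column $F_{j4}$.

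Finally I would normalize the diagonal entries $a_{ii}$. The column $F_{i4}$ together with two of $F_{i1},F_{i2},F_{i3}$ (say $F_{i2},F_{i3}$) and the normalized facets of the other blocks meet at a vertex, so the corresponding $3\times 3$ minor in block-row $i$ — whose three columns are the second and third standard basis vectors and $a_{ii}$ — is $\pm1$; expanding, the first coordinate of $a_{ii}$ is $\pm1$. Symmetrically each coordinate of $a_{ii}$ is $\pm1$. Now multiply column $F_{i4}$ by the sign matrix $D$ so that, say, the first coordinate becomes $+1$; this is consistent across all $i$ since the $F_{i4}$ columns are distinct. To fix the remaining two coordinates to $+1$ as well, use the automorphism of the $i$-th $\Delta^3$ that permutes $F_{i1},F_{i2},F_{i3}$: since block-row $i$ of columns $F_{i1},F_{i2},F_{i3}$ is $E_3$, such a permutation simultaneously permutes the coordinates of $a_{ii}$ and permutes those three standard basis columns among themselves, and composing with a further left $\mathrm{GL}_3$-correction restoring $E_3$ in that block (which only affects block-row $i$, and below it everything is zero) one can bring $a_{ii}$ to ${}^t(\pm1,\pm1,\pm1)$ and then to ${}^t(1,1,1)$ after adjusting the sign via $D$ — here one must be careful that these sign and permutation adjustments in different blocks act on disjoint sets of columns and hence commute, so they can be performed independently for each $i$. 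The main obstacle I expect is precisely this bookkeeping in the last step: ensuring that the operations used to normalize $a_{ii}$ do not disturb the already-achieved staircase shape nor the normalizations in other blocks, which comes down to checking that each operation touches only columns within block $i$ (for $D$ and the automorphism) or only rows in block-row $i$ and below (for the $\mathrm{GL}$-correction, where everything below is zero) — a routine but genuinely necessary verification.
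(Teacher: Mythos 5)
Your overall strategy (normalize the non-$F_{i4}$ columns by left multiplication, then fix the signs of $a_{ii}$) is the right one, but two of your key steps are wrong as stated. First, the nonsingularity condition for a characteristic matrix over $(\Delta^3)^n$ concerns $3n\times 3n$ minors, one for each vertex (where exactly $3n$ facets meet), not $3\times 3$ minors. The submatrix on columns $F_{11},F_{12},F_{13}$ is a $3n\times 3$ matrix, so it does not ``lie in $\mathrm{GL}_3(\Z)$'' for $n>1$, and there is no distinguished $3\times 3$ minor of it that the condition forces to be $\pm 1$. The same problem recurs at each step of your iteration: the ``top $3\times 3$ part'' of the truncated columns $F_{i1},F_{i2},F_{i3}$ can perfectly well be singular (for instance, permute two row-blocks of the standard product characteristic matrix by a left $\mathrm{GL}_{3n}(\Z)$ multiplication); all the vertex condition gives is that these columns form a primitive block, i.e.\ extend to a $\Z$-basis. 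The paper avoids the iteration entirely: the $3n$ facets $F_{ij}$, $i=1,\dots,n$, $j=1,2,3$, all meet at a \emph{single} vertex of $(\Delta^3)^n$, so the one $3n\times 3n$ submatrix $Q$ on those columns is in $\mathrm{GL}_{3n}(\Z)$, and $Q^{-1}B$ already has the displayed staircase form. You should replace the block-by-block elimination by this single observation (or, if you keep the iteration, replace ``invertible top $3\times 3$ minor'' by primitivity of the $3(n-i+1)\times 3$ block, which is what actually follows and still suffices).

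Second, your normalization of $a_{ii}$ does not work as described. Multiplying the column of $F_{i4}$ by $-1$ negates all three coordinates of $a_{ii}$ simultaneously, so it cannot fix the coordinates one at a time; and an automorphism permuting $F_{i1},F_{i2},F_{i3}$ only permutes the coordinates of $a_{ii}$ and never changes a sign, so it contributes nothing here. The part you do have right is the minor computation showing each coordinate of $a_{ii}$ is $\pm 1$ (this is exactly the paper's computation with the matrices $C_{ij}$, using the vertex on $\{F_{i1},F_{i2},F_{i3},F_{i4}\}\setminus\{F_{ij}\}$ together with $F_{k1},F_{k2},F_{k3}$ for $k\ne i$). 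The correct concluding move, which is what the paper means by ``multiplying columns and rows by $-1$,'' is: for each $j$ with $a_{ii}^j=-1$, negate row $3(i-1)+j$ (a left multiplication by a diagonal matrix in $\mathrm{GL}_{3n}(\Z)$) and simultaneously negate the column of $F_{ij}$ (an allowed right multiplication by $D$). This restores the $E_3$ block and flips the offending sign of $a_{ii}$, with no facet permutation needed.
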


\begin{proof}
  Let $B=(b_1^1\;b_2^1\;b_3^1\;b_4^1\;b_1^2\;b_2^2\;b_3^2\;b_4^2\;\cdots\;b_1^n\;b_2^n\;b_3^n\;b_4^n)$ be a characteristic matrix over $(\Delta^3)^n$, where $b_j^i\in\Z^{3n}$. Since the facets of $(\Delta^3)^n$ except for $F_{14},F_{24},\ldots,F_{n4}$ meet at a vertex, the matrix $Q=(b_1^1\;b_2^1\;b_3^1\;b_1^2\;b_2^2\;b_3^2\;\cdots\;b_1^n\;b_2^n\;b_3^n)$ is invertible, so $B$ is equivalent to
  \[
    Q^{-1}B=
    \begin{pmatrix}
      E_3&c_{11}&&c_{12}&&c_{13}&&&c_{1n}\\
      &c_{21}&E_3&c_{22}&&c_{23}&&&c_{2n}\\
      &c_{31}&&c_{32}&E_3&c_{33}&&&c_{3n}\\
      &\vdots&&\vdots&&\vdots&\ddots&&\vdots\\
      &c_{n1}&&a_{n2}&&c_{n3}&&E_3&c_{nn}
    \end{pmatrix}
  \]
  for $c_{ij}\in\Z^3$. Since the facets of $(\Delta^3)^n$ except for $F_{14},\ldots,F_{i-1,4},F_{ij},F_{i+1,4},\ldots,F_{n4}$ meet at a vertex for $i=1,\ldots,n$ and $j=1,2,3$,
  \[
    \det
    \begin{pmatrix}
      E_{3(i-1)}\\
      &C_{ij}\\
      &&E_{3(n-i)}
    \end{pmatrix}
    =\pm 1
  \]
  for $i=1,\ldots,n$ and $j=1,2,3$, where $C_{ij}$ is the $3\times 4$ matrix $(E_3\;c_{ii})$ with $j$-th column removed. Then we get $c_{ii}={}^t(\pm 1,\pm 1,\pm 1)$. Since multiplying columns and rows of a characteristic matrix by $-1$ yields an equivalent characteristic matrix, we obtain that $Q^{-1}B$ is equivalent to the matrix in the statement, completing the proof.
\end{proof}

Now we are ready to prove Theorem \ref{main}.

\begin{proof}
  [Proof of Theorem \ref{main}]
  By Propositions \ref{M(A)} and \ref{polytope}, we only need to consider a quasitoric manifold $M(A)$ over $(\Delta^3)^n$ such that $A$ is a characteristic matrix over $(\Delta^3)^n$ in Lemma \ref{standard form}. By \cite[Theorem 4.14]{DJ},
  \[
    H^*(M(A))=\Z[t_{ij}\mid i=1,\ldots,n,\,j=1,2,3,4]/I+J,\quad|t_{ij}|=2,
  \]
  where $I=(t_{i1}t_{i2}t_{i3}t_{i4}\mid i=1,\ldots,n)$ and
  \[
    J=\left(t_{ij}+\sum_{k=1}^na_{ik}^jt_{k4}\;\middle|\;i=1,\ldots,n,\,j=1,2,3\right),
  \]
  where $a_{ik}={}^t(a_{ik}^1,a_{ik}^2,a_{ik}^3)$. So we get
  \begin{equation}
    \label{H(M(A))}
    H^*(M(A))=\Z[t_1,\ldots,t_n]/(q_1,\ldots,q_n),\quad|t_i|=2
  \end{equation}
  such that
  \[
    q_i=t_i\prod_{j=1}^3\left(\sum_{k=1}^na_{ik}^jt_k\right),
  \]
  where we put $t_i=t_{i4}$. Now
  \begin{align*}
    \Sq^2q_i&=\left(t_i+\sum_{j=1}^3\sum_{k=1}^na_{ik}^jt_k\right)q_i\\
    &=((1+a_{ii}^1+a_{ii}^2+a_{ii}^3)t_i+\sum_{k\ne i}(a_{ik}^1+a_{ik}^2+a_{ik}^3)t_k)q_i\\
    &=\sum_{k\ne i}(a_{ik}^1+a_{ik}^2+a_{ik}^3)t_kq_i
  \end{align*}
  because $a_{ii}={}^t(1,1,1)$. Thus by Lemma \ref{regular sequence} and Proposition \ref{criterion}, the Whitehead products $[\alpha_i,\beta_j]$ are trivial for $i,j=1,\ldots,n$ if and only if $(1,1,1)a_{ij}=a_{ij}^1+a_{ij}^2+a_{ij}^3\equiv 0\mod 2$ for all $i\ne j$. On the other hand, by the adjointness of Whitehead products and Samelson products \cite{S}, the Whitehead product $[\alpha_i,\beta_j]$ is trivial if and only if the Samelson product $\langle a_i,b_j\rangle$ is trivial, where $a_i$ and $b_j$ are as in Section \ref{Computation of Whitehead products}. Therefore by Lemma \ref{equivalent condition}, the proof is finished.
\end{proof}

Hereafter, let $k$ be a positive integer. For $n\ge 1$, we define a graded algebra
\[
  H(k,n)=\Z[t_1,\ldots,t_n]/(t_1^4+kt_1^3t_2,\ldots,t_{n-1}^4+kt_{n-1}^3t_n,t_n^4),\quad|t_i|=2.
\]
We need the following properties of $H(k,n)$.

\begin{lemma}
  \label{x^4}
  If $x\in H(k,n)$ satisfies $|x|=2$ and $x^4=0$, then $x=at_n$ for some $a\in\Z$.
\end{lemma}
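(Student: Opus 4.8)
Write $x = \sum_{i=1}^n a_i t_i$ with $a_i \in \Z$. The plan is to compute $x^4$ directly in the polynomial ring $\Z[t_1,\ldots,t_n]$ and then reduce modulo the ideal $(q_1,\ldots,q_{n})$ where $q_i = t_i^4 + k t_i^3 t_{i+1}$ for $i<n$ and $q_n = t_n^4$. The key structural observation is that this ideal is generated by a regular sequence (by Lemma \ref{regular sequence}, since $H(k,n)$ is the rational -- indeed integral -- cohomology of a quasitoric manifold, or directly because $q_1,\ldots,q_n$ is a homogeneous system of parameters in a polynomial ring over a field, for every field), so $H(k,n)$ has a clean additive basis: the monomials $t_1^{e_1}\cdots t_n^{e_n}$ with $0 \le e_i \le 3$. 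I will use this basis to detect when $x^4 = 0$.

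First I would handle the case $a_n \ne 0$ and derive a contradiction, which proves the lemma. Consider the ring homomorphism $\phi\colon H(k,n)\to \Z[t_{n-1},t_n]/(t_{n-1}^4 + k t_{n-1}^3 t_n,\ t_n^4)$ obtained by setting $t_i = 0$ for $i \le n-2$; this is well-defined since each relation $q_i$ with $i \le n-2$ maps to $0$ and $q_{n-1},q_n$ map to the two relations on the right. Then $\phi(x) = a_{n-1}t_{n-1} + a_n t_n$, and $x^4 = 0$ forces $(a_{n-1}t_{n-1} + a_n t_n)^4 = 0$ in the two-variable ring $R_2 := \Z[t_{n-1},t_n]/(t_{n-1}^4 + k t_{n-1}^3 t_n,\ t_n^4)$. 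Now $R_2$ has $\Z$-basis $\{t_{n-1}^e t_n^f : 0 \le e,f \le 3\}$, and expanding $(a_{n-1}t_{n-1}+a_n t_n)^4$ by the binomial theorem and reducing using $t_{n-1}^4 = -k t_{n-1}^3 t_n$ and $t_n^4 = 0$, the coefficient of the basis element $t_{n-1}^3 t_n$ is $4 a_{n-1}^3 a_n + a_{n-1}^4\cdot(-k) = -k a_{n-1}^4 + 4 a_{n-1}^3 a_n = a_{n-1}^3(4a_n - k a_{n-1})$, while the coefficient of $t_{n-1}^2 t_n^2$ is $6 a_{n-1}^2 a_n^2$. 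If $a_n \ne 0$, the latter coefficient forces $a_{n-1} = 0$; but then $\phi(x) = a_n t_n$ with $a_n \ne 0$, and $(a_n t_n)^4 = a_n^4 t_n^4 = 0$ is automatic, so I have not yet reached a contradiction purely at the level of $R_2$ -- I would then need to inspect further coefficients back in $H(k,n)$ itself, or rather conclude at this point that $a_n t_n$ is exactly the allowed form. Reworking the logic: what the $R_2$-computation actually gives is that $a_n \ne 0$ \emph{implies} $a_{n-1} = 0$; iterating with the analogous projections $H(k,n)\to R_j$ killing all but $t_{j-1},t_j$ (and eventually $H(k,n)\to \Z[t_1,\ldots]$ appropriately) one shows $a_n\neq 0$ forces $a_{n-1}=a_{n-2}=\cdots=a_1 = 0$, i.e. $x = a_n t_n$; and conversely $x=a t_n$ always satisfies $x^4=0$. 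So the dichotomy "$x^4=0$" $\iff$ "$x = at_n$" follows once I also check that any $x$ with some $a_j\ne 0$ for $j<n$ (and hence, by the above, with $a_n=0$) has $x^4\ne 0$.

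That last point -- showing $x = \sum_{i<n} a_i t_i$ with not all $a_i$ zero has $x^4 \ne 0$ -- is the real content, and I expect it to be the main obstacle. Let $j$ be the smallest index with $a_j \ne 0$ and project away $t_1,\ldots,t_{j-1}$ as before; we may assume $j = 1$, so $x = \sum_{i=1}^n a_i t_i$ with $a_1 \ne 0$ and $a_n = 0$ (the $a_n=0$ reduction coming from the first part). I would look at the coefficient of $t_1^3 t_2$ (a basis monomial, present because $n \ge 2$ here; if $n=1$ the statement is vacuous as $H(k,1)=\Z[t_1]/(t_1^4)$ and $x=at_1$, $x^4=0$ forces nothing extra -- actually for $n=1$ "$x=at_n$" holds for all $x$, consistent). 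In $H(k,n)$ the only relation involving $t_1$ is $t_1^4 = -kt_1^3 t_2$; expanding $x^4 = (\sum a_i t_i)^4$, the terms contributing to $t_1^3 t_2$ after reduction are $\binom{4}{3}a_1^3 a_2 \, t_1^3 t_2$ from the unreduced expansion plus $a_1^4\,t_1^4 = -k a_1^4\, t_1^3 t_2$ from reducing the pure $t_1^4$ term, giving coefficient $4 a_1^3 a_2 - k a_1^4 = a_1^3(4a_2 - k a_1)$. This vanishes iff $a_2 = k a_1/4$; since $a_1,a_2\in\Z$ this can genuinely happen (e.g. $k=4$), so this single coefficient is not conclusive -- I will also need the coefficient of $t_1^2 t_2^2$, namely $\binom{4}{2}a_1^2 a_2^2 = 6 a_1^2 a_2^2$, which vanishes iff $a_2 = 0$. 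If $a_2 \ne 0$ the $t_1^2 t_2^2$ coefficient already kills $x^4=0$; if $a_2 = 0$ then the $t_1^3 t_2$ coefficient is $-k a_1^4 \ne 0$ (as $k \ge 1$, $a_1\ne 0$), again killing $x^4 = 0$. Either way $x^4 \ne 0$, as required. The one genuine subtlety to get right is making sure that in passing between $H(k,n)$ and the two-variable quotients the relevant monomials really are part of the $\Z$-basis and that no higher relations interfere at degree $8$ (i.e. in the degree-$8$ part, which in internal grading is where $x^4$ lives) -- this is where I would invoke regularity of the sequence $q_1,\ldots,q_n$ carefully, so that reduction to normal form is unambiguous over $\Z$.
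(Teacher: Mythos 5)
Your argument is correct and rests on the same key fact as the paper's proof: the degree-$8$ part of $H(k,n)$ has the monomials $t_{i_1}t_{i_2}t_{i_3}t_{i_4}$ with not all four indices equal as a $\Z$-basis, and one detects $x^4\ne 0$ by reading off coefficients in this basis. The paper does it in one step, entirely inside $H(k,n)$: the coefficient of $t_i^2t_j^2$ in $x^4$ is $6a_i^2a_j^2$ for every pair $i\ne j$ (no reduction term contributes to such monomials), so $x^4=0$ forces at most one $a_i$ to be nonzero, and then $t_i^4=0$ only for $i=n$. Two remarks on your version. First, the consecutive-pair iteration in your opening paragraph does not deliver what you claim: from $a_n\ne 0$ the pair $(n-1,n)$ gives $a_{n-1}=0$, but the pair $(n-2,n-1)$ then yields only $a_{n-2}a_{n-1}=0$, which is vacuous; you would need the non-consecutive projections onto $(t_i,t_n)$, which your parenthetical ``appropriately'' does not actually supply. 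Second, this does not matter, because your second paragraph is already a complete proof on its own: if $j$ is the smallest index with $a_j\ne 0$ and $j<n$, the coefficients of $t_j^2t_{j+1}^2$ and of $t_j^3t_{j+1}$, namely $6a_j^2a_{j+1}^2$ and $a_j^3(4a_{j+1}-ka_j)$, cannot both vanish, so $x^4\ne 0$; hence $x^4=0$ forces $j=n$ or $x=0$. That argument nowhere uses $a_n=0$, so the preliminary case split on $a_n$ (and the two-variable quotients altogether) can be deleted, which essentially recovers the paper's proof.
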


\begin{proof}
  Since $|x|=2$, we may put $x=a_1t_1+\cdots+a_nt_n$ for $a_1,\ldots,a_n\in\Z$. Note that $\{t_{i_1}t_{i_2}t_{i_3}t_{i_4}\mid 1\le i_1\le i_2\le i_3\le i_4\le n,\,i_1<i_4\}$ is a basis of the degree $8$ part of $H(k,n)$. We express $x^4$ as a linear combination of this basis. Then $x^4$ includes the term $6a_i^2a_j^2t_i^2t_j^2$ for $i\ne j$, implying $a_ia_j=0$ for $i\ne j$. This readily implies $x=a_it_i$ for some $1\le i\le n$. On the other hand, $t_i^4=0$ in $H(k,n)$ if and only if $i=n$. Thus the statement is proved.
\end{proof}

\begin{lemma}
  \label{atomic H(k,n)}
  If connected graded rings $A,B$ have nontrivial elements of degree two, then $H(k,n)$ is not isomorphic to $A\otimes B$.
\end{lemma}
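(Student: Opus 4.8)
The plan is to argue by contradiction using induction on $n$, with Lemma \ref{x^4} as the main tool. For a connected graded ring $R$ (so $R=R^0\oplus R^1\oplus\cdots$ with $R^0=\Z$), write $N(R)=\{x\in R^2\mid x^4=0\}$; Lemma \ref{x^4} says precisely that $N(H(k,m))=\Z t_m$ for every $m\ge 1$, where $t_m$ is the last variable. Suppose $H(k,n)\cong A\otimes B$ with $A,B$ connected graded and $A^2\ne 0\ne B^2$. Since $H(k,n)$ is concentrated in even degrees, so are $A$ and $B$, and the Künneth decomposition expresses each degree as an internal direct sum $H(k,n)^d=\bigoplus_j A^{2j}\otimes B^{d-2j}$; in particular $H(k,n)^2=A^2\oplus B^2$ as complementary subgroups of $H(k,n)^2=\bigoplus_i\Z t_i$. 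Moreover $A$ and $B$, being quotients of $H(k,n)$ by the augmentation ideals of the other factor, are connected and generated in degree two, and the inclusions $A,B\hookrightarrow A\otimes B$ are split. The base case $n=1$ is immediate, since $H(k,1)^2=\Z t_1$ has rank one while $A^2\oplus B^2$ has rank at least two.

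For $n\ge 2$ I would first locate $t_n$ in the decomposition. Write $t_n=a_0+b_0$ with $a_0\in A^2$, $b_0\in B^2$. Expanding $0=t_n^4=\sum_{j=0}^4\binom{4}{j}a_0^jb_0^{4-j}$ in the internal direct sum $\bigoplus_j A^{2j}\otimes B^{8-2j}$, the $j=4$ and $j=0$ components force $a_0^4=0$ in $A$ and $b_0^4=0$ in $B$; since the inclusions $A,B\hookrightarrow A\otimes B$ are split, this gives $a_0,b_0\in N(H(k,n))=\Z t_n$. Because $A^2$ and $B^2$ are complementary summands, $\Z t_n$ can meet at most one of them nontrivially (if $ct_n\in A^2$ and $dt_n\in B^2$ with $c,d\ne 0$, then $cd\,t_n\in A^2\cap B^2=0$, a contradiction); as $a_0+b_0=t_n\ne 0$, one of $a_0,b_0$ vanishes, say $b_0=0$, so $t_n=a_0\in A^2$. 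The same complementarity then forces $N(B)=0$: if $b\in B^2$ satisfied $b^4=0$ in $B$, then $b\in N(H(k,n))=\Z t_n\subseteq A^2$, so $b\in A^2\cap B^2=0$.

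Next I would kill $t_n$. As $t_n\in A^2$, the ideal it generates in $A\otimes B$ is $(t_n)_A\otimes B$, so by right-exactness of $\otimes$ we get $H(k,n)/(t_n)\cong\bigl(A/(t_n)_A\bigr)\otimes B$; on the other hand, setting $t_n=0$ in the presentation of $H(k,n)$ yields exactly $H(k,n-1)$. Thus $H(k,n-1)\cong\bigl(A/(t_n)_A\bigr)\otimes B$ with $B^2\ne 0$, so the induction hypothesis forces $\bigl(A/(t_n)_A\bigr)^2=0$, i.e.\ $A^2=\Z t_n$. Since $A$ is connected and generated in degree two, this gives $A/(t_n)_A=\Z$, hence $B\cong H(k,n-1)$ and therefore $N(B)\cong N(H(k,n-1))=\Z t_{n-1}\ne 0$. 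This contradicts $N(B)=0$, completing the induction.

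The main obstacle is the middle step: pinning the distinguished class $t_n$ — characterized ring-theoretically by Lemma \ref{x^4} as the unique degree-two class (up to a scalar) whose fourth power vanishes — to a single tensor factor, and deducing that $N$ of the complementary factor is trivial. Once this is done, the clean identification $H(k,n)/(t_n)=H(k,n-1)$ makes the induction go through. A secondary point to keep track of is that $A$ and $B$, as quotients of $H(k,n)$, are automatically connected and generated in degree two, which is what licenses the conclusion $A/(t_n)_A=\Z$ in the last step; I would also want to be careful that all the direct-sum bookkeeping is genuinely internal, so that vanishing of a sum forces vanishing of each homogeneous summand.
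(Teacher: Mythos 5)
Your proof is correct and follows essentially the same route as the paper's: induction on $n$, writing $t_n$ as a sum of degree-two classes from the two tensor factors, using the vanishing of its fourth power to place it in a single factor, quotienting by it to invoke the induction hypothesis, and deriving the final contradiction from Lemma \ref{x^4}. The only cosmetic differences are that the paper kills one summand via the middle term $6a^2b^2$ of the expansion (using that $A$ and $B$ are polynomial below degree $8$) where you use the extreme components together with Lemma \ref{x^4} and complementarity, and that the paper phrases the endgame as exhibiting two linearly independent degree-two classes with vanishing fourth power rather than as $N(B)=0$ versus $N(B)\ne 0$.
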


\begin{proof}
  We prove the statement by induction on $n$. For $n=1$, the statement holds because the degree two part of $H(k,1)$ is isomorphic to $\Z$. We assume the $n=m$ case, and prove the $n=m+1$ case. Suppose that there is an isomorphism $f\colon H(k,m+1)\to A\otimes B$. We may put $f(t_{m+1})=a+b$ for $a\in A$ and $b\in B$, where $|a|=|b|=2$. Then
  \[
    0=f(t_{m+1}^4)=a^4+4a^3b+6a^2b^2+4ab^3+b^4.
  \]
  Since $H(k,m+1)\cong A\otimes B$, $A$ and $B$ are isomorphic to polynomial rings in degrees $<8$, implying $a=0$ or $b=0$. We may assume $b=0$. Then $f$ induces an isomorphism
  \[
    \bar{f}\colon H(k,m+1)/(t_{m+1})\to(A/(a))\otimes B.
  \]
  Since $H(k,m+1)/(t_{m+1})\cong H(k,m)$, it follows from the assumption that $A/(a)=\Z$. So $t=f^{-1}(\bar{f}(t_m))$ and $t_{m+1}$ are linearly independent in $H(k,m+1)$, and $t^4=t_{m+1}^4=0$. This is a contradiction by Lemma \ref{x^4}, and therefore $H(k,m+1)$ is not isomorphic to $A\otimes B$, completing the proof.
\end{proof}

For the rest of the paper, we set $n\ge 2$. Let $M(k,n)$ denote the quasitoric manifold in Theorem \ref{family}. Then by \eqref{H(M(A))},
\begin{equation}
  \label{M(A) H(k,n)}
  H^*(M(k,n))=H(k,n).
\end{equation}
We remark that $M(k,n)$ is a generalized Bott manifold such that $M(k,n+1)$ is the projectivization of a complex vector bundle $E\oplus\underline{\C}^3\to M(k,n)$, where $E$ is the complex line bundle with total Chern class $c(E)=1+kt_1$ and $\underline{\C}$ denotes the trivial bundle. We show atomicity of $M(k,n)$ with respect to products of quasitoric manifolds.

\begin{proposition}
  \label{atomicity}
  The quasitoric manifold $M(k,n)$ is not homotopy equivalent to a product of two nontrivial quasitoric manifolds.
\end{proposition}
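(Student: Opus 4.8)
The plan is to reduce Proposition \ref{atomicity} to a purely algebraic statement about the cohomology ring, which is exactly what Lemma \ref{atomic H(k,n)} provides. Suppose for contradiction that $M(k,n)$ is homotopy equivalent to a product $M_1\times M_2$ of two nontrivial quasitoric manifolds. A nontrivial quasitoric manifold has positive dimension, so each $M_i$ has a nonzero cohomology group in some positive degree; since the cohomology of a quasitoric manifold is concentrated in even degrees and is generated by degree-two classes (by \cite[Proposition 3.10]{DJ} or \cite[Theorem 4.14]{DJ}), each $H^*(M_i;\Z)$ in fact has a nontrivial element of degree two. By the Künneth theorem, $H^*(M(k,n);\Z)\cong H^*(M_1;\Z)\otimes H^*(M_2;\Z)$ as graded rings, and by \eqref{M(A) H(k,n)} this means $H(k,n)\cong A\otimes B$ with $A=H^*(M_1;\Z)$ and $B=H^*(M_2;\Z)$ connected graded rings each having a nontrivial degree-two element. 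This directly contradicts Lemma \ref{atomic H(k,n)}, so no such decomposition exists.

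The main point requiring care is the passage from ``nontrivial quasitoric manifold'' to ``$H^*$ has a nonzero degree-two element,'' together with confirming that the tensor-product decomposition of cohomology rings has no torsion complications. Since $H^*(M_i;\Z)$ is free abelian (again from \cite[Theorem 4.14]{DJ}), the Künneth isomorphism is a ring isomorphism with no $\operatorname{Tor}$ terms, so this step is clean. For the degree-two claim: a quasitoric manifold of dimension $2d>0$ over a simple polytope with $m$ facets has $H^2$ of rank $m-d>0$ by \cite[Theorem 4.14]{DJ}, and a quasitoric manifold of dimension $0$ is a point, which is the trivial case we have excluded. Hence both $A$ and $B$ genuinely have nontrivial degree-two parts, and Lemma \ref{atomic H(k,n)} applies verbatim.

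I do not anticipate a serious obstacle here: once the algebraic reformulation is in place, Lemma \ref{atomic H(k,n)} does all the work, and that lemma has already been established by the inductive argument using Lemma \ref{x^4}. The only subtlety worth a sentence in the write-up is that homotopy equivalence — not merely weak equivalence of quasitoric manifolds — suffices for the argument, because the conclusion is phrased in terms of homotopy equivalence and only the ring structure of integral cohomology is used.
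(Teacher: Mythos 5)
Your proposal is correct and follows the same route as the paper: apply the K\"unneth formula to get $H(k,n)\cong H^*(M_1)\otimes H^*(M_2)$ and then invoke Lemma \ref{atomic H(k,n)}. The additional justifications you supply (freeness of the cohomology of a quasitoric manifold and the nonvanishing of $H^2$ for a nontrivial factor) are correct details that the paper leaves implicit.
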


\begin{proof}
  Suppose $M(k,n)\simeq M\times N$ for nontrivial quasitoric manifolds $M,N$. Then by the K\"{u}nneth formula,
  \[
    H(k,n)\cong H^*(M)\otimes H^*(N).
  \]
  Thus the statement follows from Lemma \ref{atomic H(k,n)}.
\end{proof}

Now we start to prove Theorem \ref{family}. The following lemma is immediate from Lemma \ref{x^4}.

\begin{lemma}
  \label{f(t_n)}
  Every graded algebra isomorphism $f\colon H(k,n)\xrightarrow{\cong}H(l,n)$ satisfies
  \[
    f(t_n)=\pm t_n.
  \]
\end{lemma}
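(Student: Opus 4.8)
The plan is to combine Lemma \ref{x^4} with the observation that, in $H(k,n)$, the element $t_n$ is characterized (up to sign and scalar) among degree-two classes by the vanishing of its fourth power. Concretely, suppose $f\colon H(k,n)\xrightarrow{\cong}H(l,n)$ is a graded algebra isomorphism. Since $f$ is a ring map, $f(t_n)^4=f(t_n^4)=f(0)=0$, so $f(t_n)$ is a degree-two element of $H(l,n)$ whose fourth power vanishes. Applying Lemma \ref{x^4} to $H(l,n)$ (which has exactly the same shape as $H(k,n)$, only the structure constants differ), we conclude $f(t_n)=a t_n$ for some $a\in\Z$.

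It then remains to pin down $a=\pm1$. For this I would invoke that $f$ is an isomorphism, so in particular $f$ restricts to an isomorphism on the degree-two parts $H(k,n)^2=\Z\langle t_1,\ldots,t_n\rangle\xrightarrow{\cong}\Z\langle t_1,\ldots,t_n\rangle=H(l,n)^2$; such a $\Z$-module isomorphism must send a basis element to a primitive vector, and since $f(t_n)=a t_n$ lies along a coordinate axis, primitivity forces $a=\pm1$. Alternatively, and perhaps more cleanly for insertion here, the inverse isomorphism $f^{-1}\colon H(l,n)\to H(k,n)$ likewise satisfies $f^{-1}(t_n)=b t_n$ for some $b\in\Z$ by the same argument, and then $t_n=f^{-1}(f(t_n))=f^{-1}(a t_n)=ab\, t_n$ in $H(k,n)$; since $t_n$ is a nonzero element of a free $\Z$-module in degree two, $ab=1$, hence $a=b=\pm1$. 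Either way we obtain $f(t_n)=\pm t_n$.

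There is no real obstacle here: the statement is an essentially immediate consequence of Lemma \ref{x^4}, as the paper itself signals by calling it ``immediate.'' The only point requiring a sentence of care is that Lemma \ref{x^4} as stated applies to $H(k,n)$, so one must note that it applies verbatim to $H(l,n)$ as well (the proof of Lemma \ref{x^4} used only the degree-eight basis and the fact that $t_i^4=0$ iff $i=n$, both of which hold for any positive integer in place of $k$). The short proof would read: \emph{Since $f(t_n)^4=f(t_n^4)=0$, Lemma \ref{x^4} applied to $H(l,n)$ gives $f(t_n)=a t_n$ for some $a\in\Z$; applying the same reasoning to $f^{-1}$ gives $f^{-1}(t_n)=b t_n$, and composing yields $ab=1$, so $a=\pm1$.} I would keep it to two or three sentences.
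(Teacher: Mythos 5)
Your proposal is correct and matches the paper's intent: the paper gives no explicit proof, stating only that the lemma is ``immediate from Lemma \ref{x^4}'', and your argument (apply $f$ to $t_n^4=0$, invoke Lemma \ref{x^4} for $H(l,n)$, then use $f^{-1}$ to force the coefficient to be a unit) is precisely the reasoning being left to the reader. No gaps; your two-to-three sentence version would serve as the written-out proof.
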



For $j=1,\ldots,n$, we define an ideal of $H(k,n)$ by
\[
  I_j(k,n)=(t_{n-j+1},t_{n-j+2},\ldots,t_n).
\]
Then we get a sequence
\[
  I_1(k,n)\subset I_2(k,n)\subset\cdots\subset I_n(k,n).
\]

\begin{lemma}
  \label{I(k,n)}
  Every graded algebra isomorphism $f\colon H(k,n)\xrightarrow{\cong}H(l,n)$ satisfies
  \[
    f(I_j(k,n))=I_j(l,n)
  \]
  for $j=1,\ldots,n$.
\end{lemma}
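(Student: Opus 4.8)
The plan is to prove the statement by a decreasing induction on $j$, using Lemma~\ref{f(t_n)} as the base case $j=1$ and, at each step, identifying $I_{j+1}(k,n)$ intrinsically from the ring structure of $H(k,n)$ together with the already-established identification $f(I_j(k,n))=I_j(l,n)$. The key observation is that $I_j(k,n)$ is a sub-object that is \emph{characterized} by the ring structure once we know the previous ideals, so the isomorphism is forced to respect it. Concretely, $H(k,n)/I_j(k,n)\cong H(k,n-j)$ (truncating the last $j$ generators and relations), so $f$ descends to an isomorphism $\bar f\colon H(k,n-j)\xrightarrow{\cong}H(l,n-j)$ whenever $f(I_j(k,n))=I_j(l,n)$; applying Lemma~\ref{f(t_n)} to $\bar f$ gives $\bar f(t_{n-j})=\pm t_{n-j}$, which says precisely that $f$ maps the image of $t_{n-j}$ into $I_{j+1}(l,n)/I_j(l,n)$ up to sign, and hence $f(I_{j+1}(k,n))=I_{j+1}(l,n)$.

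First I would record that $I_j(k,n)$ is generated by $t_{n-j+1},\dots,t_n$, that the quotient $H(k,n)/I_j(k,n)$ is isomorphic as a graded ring to $H(k,n-j)$ via the map sending $t_i\mapsto t_i$ for $i\le n-j$ (the relations $t_{n-j+1}^4+kt_{n-j+1}^3t_{n-j+2}$, etc., and $t_i$ for $i> n-j$ all die, leaving exactly the relations of $H(k,n-j)$), and that this quotient is again of the form studied in Lemma~\ref{x^4} and Lemma~\ref{f(t_n)}. Then for the base case $j=1$, Lemma~\ref{f(t_n)} gives $f(t_n)=\pm t_n$, so $f(I_1(k,n))=f((t_n))=(\pm t_n)=(t_n)=I_1(l,n)$. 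For the inductive step, assume $f(I_j(k,n))=I_j(l,n)$; then $f$ induces $\bar f\colon H(k,n)/I_j(k,n)\to H(l,n)/I_j(l,n)$, i.e.\ $\bar f\colon H(k,n-j)\xrightarrow{\cong}H(l,n-j)$, and Lemma~\ref{f(t_n)} applied to $\bar f$ yields $\bar f(t_{n-j})=\pm t_{n-j}$ in $H(l,n-j)$. Pulling this back through the quotient maps, $f(t_{n-j})\equiv \pm t_{n-j}\pmod{I_j(l,n)}$, so $f(t_{n-j})\in I_{j+1}(l,n)$ and in fact $f$ maps the generating set $\{t_{n-j},t_{n-j+1},\dots,t_n\}$ of $I_{j+1}(k,n)$ into $I_{j+1}(l,n)$; applying the same argument to $f^{-1}$ gives the reverse inclusion, hence equality.

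The main obstacle I anticipate is making the identification $H(k,n)/I_j(k,n)\cong H(k,n-j)$ cleanly and checking that Lemma~\ref{f(t_n)} genuinely applies to the induced isomorphism $\bar f$ — one must verify that $\bar f$ is an isomorphism of graded \emph{algebras} (not merely of graded groups) and that the quotient ring really is $H(k,n-j)$ on the nose, including the correct coefficient $k$ surviving in the truncated relations, so that Lemma~\ref{f(t_n)} (and behind it Lemma~\ref{x^4}, which is insensitive to the value of $k$) can be invoked verbatim. Once that bookkeeping is in place the induction is essentially mechanical. A minor point to handle carefully is the case $j=n$: there $I_n(k,n)$ is the full augmentation ideal, and $f(I_n(k,n))=I_n(l,n)$ holds trivially because any graded algebra isomorphism of connected graded rings preserves the augmentation ideal, so the induction need only be run for $j\le n-1$ if one prefers, or one simply notes the $j=n$ case separately.
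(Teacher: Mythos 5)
Your proof is correct and follows essentially the same route as the paper's: induction on $j$ starting from Lemma \ref{f(t_n)}, passing to the induced isomorphism on $H(k,n)/I_j(k,n)\cong H(k,n-j)$, and applying Lemma \ref{f(t_n)} again to identify the image of $t_{n-j}$. Your explicit remark about applying the same argument to $f^{-1}$ to get equality of ideals (rather than just one inclusion) is a small point the paper leaves implicit, but the substance is identical.
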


\begin{proof}
  We show $f(I_j(k,n))=I_j(l,n)$ by induction on $j$. For $j=1$, $f(I_1(k,n))=I_1(l,n)$ by Lemma \ref{f(t_n)}. Assume that the statement holds for $j=1,\ldots,p$. Then the map $f$ induces an isomorphism
  \[
    \bar{f}\colon H(k,n)/I_p(k,n)\xrightarrow{\cong}H(l,n)/I_p(l,n).
  \]
  On the other hand, there is a natural isomorphism
  \[
    H(m,n)/I_p(m,n)\cong H(m,n-p)
  \]
  for any positive integer $m$ such that $I_1(m,n-p)$ in $H(m,n-p)$ lifts to $I_{p+1}(m,n)$ in $H(m,n)$. By the induction hypothesis, $\bar{f}(I_1(k,n-p))\subset I_1(k,n-p)$ through the above natural isomorphism. Thus we get $f(I_{p+1}(k,n))=I_{p+1}(l,n)$, completing the proof.
\end{proof}

\begin{proposition}
  \label{H(k,n)}
  $H(k,n)\cong H(l,n)$ if and only if $k=l$.
\end{proposition}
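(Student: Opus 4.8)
The plan is to prove the nontrivial direction (that $H(k,n)\cong H(l,n)$ forces $k=l$) by induction on $n$, using the filtration by the ideals $I_j$ and Lemma \ref{I(k,n)} to reduce everything to detecting $k$ from a single relation. First I would dispose of the base case $n=1$: both $H(k,1)$ and $H(l,1)$ equal $\Z[t_1]/(t_1^4)$, so they are isomorphic for all $k,l$, and in fact there is nothing to detect --- so I would instead arrange the induction so that the genuinely new information appears at the top of the tower, and treat $n=2$ as the first substantive case. For $n=2$, $H(k,2)=\Z[t_1,t_2]/(t_1^4+kt_1^3t_2,\,t_2^4)$. Given an isomorphism $f\colon H(k,2)\to H(l,2)$, Lemma \ref{f(t_n)} gives $f(t_2)=\pm t_2$, and then $f(t_1)$ must be a degree-two element mapping to a generator of $H(l,2)/I_1(l,2)\cong\Z[t_1]/(t_1^4)$, so $f(t_1)=\pm t_1+ct_2$ for some $c\in\Z$. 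Substituting into the relation $f(t_1)^4+k f(t_1)^3 f(t_2)=0$ in $H(l,2)$ and comparing coefficients (using that $\{t_1^3t_2,\,t_1^2t_2^2,\,t_1t_2^3\}$ are part of a basis in degree $8$, with $t_1^4=-l t_1^3t_2$) should force $k=\pm l$; the ambiguity of sign is then killed by considering whether $c$ can absorb it, and a short computation shows $k=l$ is forced (the cross terms from $(\pm t_1+ct_2)^3(\pm t_2)$ and $(\pm t_1+ct_2)^4$ cannot be rebalanced while changing the coefficient of $t_1^3t_2$).

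For the inductive step, suppose the claim holds for $n-1$ and let $f\colon H(k,n)\xrightarrow{\cong}H(l,n)$ be an isomorphism. By Lemma \ref{I(k,n)}, $f(I_1(k,n))=I_1(l,n)$, so $f$ descends to an isomorphism $\bar f\colon H(k,n)/I_1(k,n)\xrightarrow{\cong}H(l,n)/I_1(l,n)$. Using the natural identification $H(m,n)/I_1(m,n)\cong H(m,n-1)$ recorded in the proof of Lemma \ref{I(k,n)}, $\bar f$ is an isomorphism $H(k,n-1)\cong H(l,n-1)$, so the induction hypothesis gives $k=l$ immediately. Thus the real content is entirely in the low case, and the filtration/quotient machinery of Lemmas \ref{f(t_n)} and \ref{I(k,n)} does all the bookkeeping; the converse direction ($k=l\Rightarrow$ trivially isomorphic) needs no argument.

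I would also double-check the one subtle point: the identification $H(k,n)/I_j(k,n)\cong H(k,n-j)$ sends the relation $t_{n-j}^4+kt_{n-j}^3t_{n-j+1}$ to $t_{n-j}^4$, i.e. the quotient \emph{loses} the coefficient $k$ attached to the variables that get killed but \emph{retains} the coefficients attached to the surviving variables --- so quotienting by $I_1$ keeps the parameter $k$ alive in the relation $t_{n-2}^4+kt_{n-2}^3t_{n-1}$ (since $t_{n-1}$ survives), which is exactly what makes the induction carry information. The main obstacle I anticipate is the coefficient-comparison computation in the base case: one has to be careful that the substitution $f(t_1)=\pm t_1+ct_2$ into $f(t_1)^4+kf(t_1)^3f(t_2)$ really does pin down $k=l$ rather than merely $k\equiv l$ modulo something, and that no more exotic form of $f(t_1)$ (e.g. involving higher-degree corrections) is possible --- but since $f$ is graded and $|f(t_1)|=2$, the form $\pm t_1+ct_2$ is forced, so this reduces to a finite linear-algebra check over $\Z$ which I expect to go through cleanly.

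\begin{proof}
  Since $H(k,n)$ and $H(l,n)$ are visibly isomorphic when $k=l$, we prove the converse. We argue by induction on $n$. When $n=2$, let $f\colon H(k,2)\xrightarrow{\cong}H(l,2)$ be an isomorphism. By Lemma \ref{f(t_n)}, $f(t_2)=\pm t_2$. Since $f$ is graded and an isomorphism, and $f(I_1(k,2))=I_1(l,2)$ by Lemma \ref{I(k,n)}, $f$ induces an isomorphism $\Z[t_1]/(t_1^4)\xrightarrow{\cong}\Z[t_1]/(t_1^4)$, so $f(t_1)\equiv\pm t_1$ modulo $(t_2)$; as $|f(t_1)|=2$ we may write $f(t_1)=\varepsilon t_1+ct_2$ with $\varepsilon=\pm 1$ and $c\in\Z$. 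Applying $f$ to the relation $t_1^4+kt_1^3t_2=0$ and computing in $H(l,2)$, where $t_1^4=-lt_1^3t_2$ and $\{t_1^3t_2,t_1^2t_2^2,t_1t_2^3\}$ is part of a basis of the degree $8$ part, the coefficient of $t_1^3t_2$ in $(\varepsilon t_1+ct_2)^4+k(\varepsilon t_1+ct_2)^3(\pm t_2)$ is $-l+ k\varepsilon(\pm 1)\cdot\varepsilon^3$, hence $k(\pm 1)=l$; comparing the coefficient of $t_1^2t_2^2$, which is $4\varepsilon^2c-4l\varepsilon c\varepsilon^2+3k\varepsilon^2 c(\pm1)=4c-4l c+3lc=4c-lc$ (using $k(\pm1)=l$) against $0$ forces this to vanish, and more directly the requirement that $f$ be invertible with $f^{-1}$ of the same shape rules out $k=-l$ when $k,l>0$; thus $k=l$.

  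Now assume $n\ge 3$ and that the statement holds for $n-1$. Let $f\colon H(k,n)\xrightarrow{\cong}H(l,n)$ be an isomorphism. By Lemma \ref{I(k,n)}, $f(I_1(k,n))=I_1(l,n)$, so $f$ descends to an isomorphism
  \[
    \bar f\colon H(k,n)/I_1(k,n)\xrightarrow{\cong}H(l,n)/I_1(l,n).
  \]
  As noted in the proof of Lemma \ref{I(k,n)}, there are natural isomorphisms $H(m,n)/I_1(m,n)\cong H(m,n-1)$ for any positive integer $m$, under which the relation $t_{n-1}^4+mt_{n-1}^3t_n$ becomes $t_{n-1}^4$ while the relation $t_{n-2}^4+mt_{n-2}^3t_{n-1}$ is unchanged. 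Hence $\bar f$ is an isomorphism $H(k,n-1)\xrightarrow{\cong}H(l,n-1)$, and the induction hypothesis gives $k=l$, completing the proof.
\end{proof}
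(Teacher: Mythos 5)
Your overall strategy is the right one and is essentially the paper's: everything reduces to the $n=2$ case, where one writes $f(t_2)=\epsilon_2t_2$, $f(t_1)=\epsilon_1t_1+ct_2$ and compares coefficients in the image of the relation $t_1^4+kt_1^3t_2$. Your handling of $n>2$ by induction on the quotients $H(m,n)/I_1(m,n)\cong H(m,n-1)$ is a legitimate variant (the paper instead restricts $f$ to the subalgebra generated by $t_{n-1},t_n$, which is canonically $H(m,2)$, and invokes the $n=2$ case directly); both are justified by Lemma \ref{I(k,n)}, and your observation that the parameter survives in the relation $t_{n-2}^4+kt_{n-2}^3t_{n-1}$ is exactly the point that makes the reduction carry information.

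The problem is the base case, where your coefficient computation is wrong in a way that breaks the logic. Writing $f(t_1)=\epsilon_1t_1+ct_2$ and $f(t_2)=\epsilon_2t_2$, the element $f(t_1)^4+kf(t_1)^3f(t_2)$ in $H(l,2)$ has
\[
(4\epsilon_1c-l+k\epsilon_1\epsilon_2)\,t_1^3t_2+(6c^2+3k\epsilon_2c)\,t_1^2t_2^2+(4\epsilon_1c^3+3k\epsilon_1\epsilon_2c^2)\,t_1t_2^3 .
\]
Your coefficient of $t_1^3t_2$ omits the term $4\epsilon_1c$, so the deduction ``hence $k(\pm1)=l$'' is invalid until you know $c=0$; and your coefficient of $t_1^2t_2^2$ (``$4c-4lc+3lc$'') is not the correct $6c^2+3k\epsilon_2c$, so your derivation of $c=0$ fails (as written it would only give $c(4-l)=0$). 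The correct route, as in the paper, is to use the vanishing of the $t_1^2t_2^2$ and $t_1t_2^3$ coefficients together: if $c\ne0$ they give $6c+3k\epsilon_2=0$ and $4c+3k\epsilon_2=0$, hence $c=0$, a contradiction; then the $t_1^3t_2$ coefficient gives $l=k\epsilon_1\epsilon_2$, and positivity of $k,l$ forces $\epsilon_1\epsilon_2=1$ and $k=l$ (no appeal to the shape of $f^{-1}$ is needed, and ``$k=-l$'' is already excluded by positivity). With the computation corrected, your proof goes through.
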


\begin{proof}
  The if part is trivial, and we consider the only if part. First, we consider the $n=2$ case. Suppose there is an isomorphism $f\colon H(k,2)\xrightarrow{\cong}H(l,2)$. By Lemma \ref{I(k,n)},
  \[
    f(t_1)=\epsilon_1(t_1+ct_2)\quad\text{and}\quad f(t_2)=\epsilon_2t_2
  \]
  for $\epsilon_1,\epsilon_2=\pm 1$ and an integer $c$, so we get
  \begin{align*}
    0&=f(t_1^4+kt_1^3t_2)\\
    &=(4c-l+k\epsilon_1\epsilon_2)t_1^3t_2+(6c^2+3k\epsilon_1\epsilon_2c)t_1^2t_2^2+(4c^3+3k\epsilon_1\epsilon_2c^2)t_1t_2^3.
  \end{align*}
  Then since $t_1^3t_2,\,t_1^2t_2^2,\,t_1t_2^3$ are linearly independent in $H(l,n)$, we obtain
  \[
    6c^2+3k\epsilon_1\epsilon_2c=0,\quad 4c^3+3k\epsilon_1\epsilon_2c^2=0,\quad 4c-l+k\epsilon_1\epsilon_2=0.
  \]
  By the first two equations, we get $c=0$, so by the third equation, we obtain $k=l$, as desired.

  Next, we consider the $n>2$ case. Let $H(m)$ denote the subalgebra of $H(m,n)$ generated by $t_{n-1}$ and $t_n$. Then there is a canonical isomorphism
  \[
    H(m)\cong H(m,2).
  \]
  Suppose there is an isomorphism $f\colon H(k,n)\xrightarrow{\cong}H(l,n)$. Then by Lemma \ref{I(k,n)}, the map $f$ restricts to an isomorphism
  \[
    H(k)\xrightarrow{\cong}H(l).
  \]
  Thus by the $n=2$ case, we get $k=l$, completing the proof.
\end{proof}

We are ready to prove Theorem \ref{family}.

\begin{proof}
  [Proof of Theorem \ref{family}]
  The first statement follows from Theorem \ref{main}, and the second statement follows from \eqref{M(A) H(k,n)} and Proposition \ref{H(k,n)}.
\end{proof}


\begin{thebibliography}{99}
  \bibitem{A} S.A. Abramyan, Iterated higher Whitehead products in topology of moment-angle complexes, Sibirsk. Mat. Zh. \textbf{60} (2019), 243-256.

  \bibitem{AP} S.A. Abramyan and T.E. Panov, Higher Whitehead products for moment-angle complexes and substitutions of simplicial complexes, Proc. Steklov Inst. Math. \textbf{305} (2019), no. 1, 1-21.

  \bibitem{Ad} J.F. Adams, On the non-existence of elements of Hopf invariant one, Ann. of Math. \textbf{72} (1960), no. 1, 20-104.

  \bibitem{BJS} M.G. Barratt, I.M. James, and N. Stein, Whitehead products in projective spaces, J. Math. Mech. \textbf{9} (1960), 813-819.

  \bibitem{BM} A.L. Blakers and W.S. Massey, Products in homotopy theory, Ann. of Math. \textbf{58} (1953), 295-324.

  \bibitem{BP} V.M. Buchstaber and T.E. Panov, Torus Actions and Their Applications in Topology and Combinatorics, Univ. Lecture Ser. \textbf{24}, Amer. Math. Soc., Providence, 2002.

  \bibitem{CMS} S. Choi, M. Masuda, and D.Y. Suh, Quasitoric manifolds over a product of simplices, Osaka J. Math. \textbf{47} (2010), no.1, 109-129.

  \bibitem{DJ} M.W. Davis and T. Januszkiewicz, Convex polytopes, Coxeter orbifolds, and torus action, Duke Math. J. \textbf{62} (1991), no. 2, 417-451.

  \bibitem{D1} N. Dobrinskaya, Loops on polyhedral products and diagonal arrangements, \texttt{arXiv:0901.2871}.

  \bibitem{D2} N. Dobrinskaya, Higher commutators in the loop space homology of $K$-products, Proc. Steklov Inst. Math. \textbf{266} (2009), 91-104

  \bibitem{FHT} Y. F\'{e}lix, S. Halperin, and J.-C. Thomas, Rational Homotopy Theory, Graduate Texts in Mathematics \textbf{205}, Springer-Verlag, New York, 2001.

  \bibitem{GPTW} J. Grbi\'{c}, T. Panov, S. Theriault, and J. Wu, The homotopy types of moment-angle complexes for flag manifolds, Trans. Amer. Math. Soc., \textbf{368} (2016), 6663-6682.

  \bibitem{G0} T. Ganea, A generalization of the homology and homotopy suspension, Comment. Math. Helv. \textbf{39} (1965), 295-322.

  \bibitem{G} T. Ganea, On the loop spaces of projective spaces, J. Math. Mech. \textbf{16} (1967), 853-858.


  \bibitem{IK2} K. Iriye and D. Kishimoto, Fat-wedge filtration and decomposition of polyhedral products, Kyoto J. Math. \textbf{59} (2019), 1-51.

  \bibitem{IK3} K. Iriye and D. Kishimoto, Whitehead products in moment-angle complexes, J. Math. Soc. Japan 72 (2020), 1239-1257.

  \bibitem{J} I.M. James, On the homotopy groups of certain pairs and triads, Q. J. Math. \textbf{5} (1954), 260-270.

  \bibitem{KK} S. Kaji and D. Kishimoto, Homotopy nilpotency in $p$-regular loop spaces, Math. Z. \textbf{264} (2010), no. 1, 209-224.

  \bibitem{KMY} D. Kishimoto, T. Matsushita, and R. Yoshise, Jacobi identity in polyhedral products, Topology Appl. \textbf{312} (2022), Paper No. 108079.

  \bibitem{KMMT} D. Kishimoto, Y. Minowa, T. Miyauchi, and Y. Tong, Homotopy commutativity in symmetric spaces, accepted by Bol. Soc. Mat. Mex.

  \bibitem{KTT} D. Kishimoto, M. Takeda, and Y. Tong, Homotopy commutativity in Hermitian symmetric spaces, Glasg. Math. J. \textbf{64} (2022), no. 3, 746-752.

  \bibitem{S} H. Samelson, A connection Between the Whitehead and the Pontryagin product, Amer. J. Math. \textbf{75} (1953), no. 4, 744-752.

  \bibitem{T} H. Toda, On the double suspension $E^2$, J. Inst. Polytech. Osaka City Univ. Ser. A \textbf{7} (1956), 103-145.

  \bibitem{W} F.D. Williams, Higher homotopy-commutativity, Trans. Amer. Math. Soc. \textbf{139} (1969), 191-206.

  \bibitem{Z} E.G. Zhuravleva, Adams-Hilton models and higher Whitehead brackets for polyhedral products, Proc. Steklov Inst. Math. \textbf{317} (2022), 94-116.
\end{thebibliography}
\end{document}